\documentclass[12pt]{amsart}
\usepackage{amsfonts}
\usepackage{latexsym,amsmath,amsthm,amssymb,amsfonts,mathrsfs}
\usepackage{graphicx}
\usepackage[usenames, dvipsnames]{color}
\usepackage{tikz}
\usepackage{pgfplots}


\usepackage{txfonts}

\numberwithin{equation}{section}

\oddsidemargin  0.15in

\evensidemargin 0.15in

\topmargin 0.in

\setlength{\textwidth}{6.25in}

\setlength{\textheight}{8.5in}

\newtheorem{thm}{Theorem}[section]

\newtheorem{lem}{Lemma}[section]
\newtheorem{prop}{Proposition}[section]
\newtheorem{cor}{Corollary}[section]
\theoremstyle{definition}
\newtheorem{dfn}{Definition}[section]

\theoremstyle{remark}
\newtheorem{rem}{Remark}

\newcommand{\ppx}[1]{\frac{\partial}{\partial x^{#1}}}

\usepackage[all,cmtip]{xy}

\begin{document}

\title[Compactness of HSL surfaces]{On the Compactness of Hamiltonian Stationary Lagrangian Surfaces in K{\"a}hler Surfaces}
\author{Jingyi Chen and John Man Shun Ma}

\address{Department of Mathematics,
The University of British Columbia,
Canada} 
\email{jychen@math.ubc.edu.ca}
\address{Department of Mathematics, Rutgers University, New Brunswick, NJ 08854, USA}
\email{john.ma311@rutgers.edu}

\begin{abstract}
We prove a bubble tree convergence theorem for a sequence of closed Hamiltonian Stationary Lagrangian surfaces with bounded areas and Willmore energies in a complete K{\"a}hler surface. We also prove two strong compactness theorems on the space of Hamiltonian stationary Lagrangian tori in $\mathbb C^2$ and $\mathbb{CP}^2$ respectively.
\end{abstract}


\maketitle

\vspace{-1cm}

\section{Introduction}

This paper concerns with compactness of a sequence of closed Hamiltonian stationary Lagrangian surfaces in a complete K\"ahler surface. 

Let $(M, \omega, \bar g, J)$ be a $2n$-dimensional symplectic manifold with a symplectic 2-form $\omega$, an almost complex structure $J$ and a compatible metric $\bar g$. An immersion $F : \Sigma \to M$ is called Hamiltonian stationary Lagrangian (HSL) if it is Lagrangian and is a critical point of the volume functional among all Hamiltonian variations \cite{Oh1}. When a Lagrangian immersion is a critical point of the volume functional for all compactly supported smooth variations, the Lagrangian immersion is a minimal submanifold, especially, it is a special Lagrangian submanifold if the ambient space is a Calabi-Yau manifold. 
As a natural generalization of the minimal Lagrangians, HSLs exist in abundance: the totally geodesic $\mathbb{RP}^n$ in $\mathbb{CP}^n$ \cite{Oh1}, the flat tori
$\mathbb S^1(a_1) \times \cdots \times \mathbb S^1(a_n)$ in $\mathbb C^n$  \cite{Oh2}, explicit examples in various K{\"a}hler ambient manifolds \cite{A, AC1, CU, CDVV, MS, Moriya}; a complete classification of HSL tori in $\mathbb C^2$ via techniques in integrable systems and in $\mathbb{CP}^2$ \cite{HR1} and other homogeneous K{\"a}hler surfaces \cite{HR2, HM, Ma, MR, Mironov}, and construction via the perturbation and gluing techniques \cite{BC,Lee,JLS}. 

A regularity theory is developed in \cite{CW1}, in particular, it is shown that a $C^1$-regular Hamiltonian stationary Lagrangian submanifold in $\mathbb C^n$ is smooth; the methods are further applied to obtain the smoothness estimates and small Willmore energy regularity in \cite{CW2}, which is essential in proving a compactness theorem for HSL submanifolds in $\mathbb C^n$ with uniformly bounded areas and total extrinsic curvatures in $\mathbb C^n$. The regularity and compactness results in \cite{CW1, CW2} rely on the assumption that the ambient space is $\mathbb C^n$ since it is used, in an essential way, that the Lagrangian phase angle $\Theta$ can be written as $\arctan\lambda_1+\dots +\arctan\lambda_n$ for the graphic representation $(x,Du)$, where $\lambda_i$'s are the eigenvalues of $D^2u$. Therefore, a bootstrapping 
between $u$ and $\Theta$ becomes effective for $\Theta$ is a fully nonlinear second order elliptic operator and satisfies the Hamiltonian stationary equation
$$
\Delta_g \Theta = 0. 
$$
Unlike minimal submanifolds, the Simons' identity for the Laplacian of the second fundamental form is not as useful for HSLs.

\vspace{.2cm}

Our main result is 

\begin{thm} \label{main thm}
Let $(M, \omega, J, \bar g)$ be a complete K{\"a}hler surface and $\Sigma$ be a closed orientable surface. Assume that $h_n$ is a Riemannian metric of constant curvature on $\Sigma$ and $F_n : (\Sigma,h_n)\to (M,\bar{g})$ is a smooth branched conformal HSL immersion, and the areas and Willmore energies of $F_n$ are uniformly bounded above and $F_n(\Sigma)$ lie in a fixed compact set $K$ in $M$, for all $n\in \mathbb N$. 

Then either $\{F_n\}$ converges to a point, or there is a stratified surface $\Sigma_\infty$ and a continuous mapping $F_\infty : \Sigma_\infty \to M$ so that a subsequence of $\{F_n\}$ converges to $F_\infty$ in the sense of bubble tree, and on each component of $\Sigma_\infty$, $F_\infty$ is a smooth branched conformal HSL immersion. Moreover, the area identity holds:
\begin{equation} \label{area identity}
\lim_{n\to \infty} \operatorname{Area} (F_n) = \operatorname{Area} (F_\infty). 
\end{equation}
 The measure $d\mu_L:= (F_\infty)_* d\mu_\infty$ on $L=F_\infty(\Sigma_\infty)$ admits the structure of a varifold with $L^2$ generalized mean curvature $\vec H_\infty$ which satisfies 
\begin{equation} \label{HSL weakly}
\int  \bar g ( {\vec H}_\infty ,  J \overline\nabla f) d\mu_L = 0, \ \ \ \text{ for all } f\in C^\infty_c(M). 
\end{equation}
\end{thm}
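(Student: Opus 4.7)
The plan is to execute a bubble tree analysis in the spirit of Sacks--Uhlenbeck and Parker--Wolfson for harmonic maps, driven by the small Willmore energy $\epsilon$-regularity for HSL immersions established in \cite{CW2}. The key identity on the HSL side is that $\vec H = J\nabla^\Sigma\Theta$ where $\Theta$ is the Lagrangian angle, so the HSL equation reads $\Delta_g\Theta=0$ and the Willmore energy $\int |\vec H|^2\, d\mu_g$ equals the Dirichlet energy of a locally defined harmonic function; the ambient K\"ahler curvature enters only through lower order terms that are uniformly controlled on the fixed compact set $K$.

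First I would fix the conformal type of the domain. Since $h_n$ is a constant curvature metric on the fixed genus surface $\Sigma$, a subsequence either converges smoothly in moduli space or degenerates along finitely many nodes in the Deligne--Mumford sense, producing the nodal domain $\Sigma_\infty$ on whose components one works with bounded geometry background metrics. Next, the $\epsilon$-regularity of \cite{CW2} produces a finite bad set $\mathcal{S}\subset \Sigma_\infty$ consisting of points where the local concentration of Willmore energy plus area exceeds the regularity threshold; away from $\mathcal{S}$ and the nodes, the threshold yields uniform $C^k$ estimates on $F_n$ in isothermal charts and hence a smooth subsequential limit $F_\infty$ which is a branched conformal HSL immersion on each regular component of $\Sigma_\infty$.

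At each $p\in\mathcal{S}$ I would extract a bubble by selecting scales $\lambda_n\to 0$ and centers $p_n\to p$ so that the rescaled immersions $\tilde F_n$ into $\lambda_n^{-1}(M-F_n(p_n))\cong \mathbb C^2$ just fail the local energy threshold on the unit ball. The ambient curvature rescales away, and passing to a subsequence yields a complete branched conformal HSL immersion into $\mathbb C^2$ with finite Willmore energy; the HSL removable singularity result implicit in \cite{CW1,CW2} then fills in the puncture at infinity to give a closed bubble. Iterating this procedure on the residual concentrated energy, the finite Willmore--area budget forces the process to terminate after finitely many steps, producing a finite tree of bubbles glued to the base $F_\infty$.

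The principal obstacle, and the step where the HSL structure is essential, is the \emph{neck analysis} which upgrades weak convergence to the area identity \eqref{area identity}. On each annulus joining a base component to a bubble I would pass to cylindrical coordinates, recast the Willmore energy as the Dirichlet energy of the harmonic phase function $\Theta$ (well defined on the annulus modulo a locally constant multiple of $2\pi$), and combine the conformal Liouville equation for the induced metric factor with the $\epsilon$-regularity on each dyadic subannulus to run a three-annuli/logarithmic decay argument in the style of Kuwert--Sch\"atzle. This produces exponential decay of the oscillations of both $F_n$ and $\Theta$ across the neck and hence vanishing of asymptotic area in the neck region, so no area is lost in the bubble formation. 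Given \eqref{area identity}, standard varifold compactness furnishes a generalized varifold on $L=F_\infty(\Sigma_\infty)$ with $\vec H_\infty\in L^2(d\mu_L)$, and \eqref{HSL weakly} is obtained by passing to the limit in the first variation identity applied to Hamiltonian vector fields $J\overline\nabla f$, which vanishes identically along each smooth HSL $F_n$.
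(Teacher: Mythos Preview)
Your proposal has a fundamental gap at the very first step: in a general K\"ahler surface there is no Lagrangian angle $\Theta$. The identity $\vec H = J\nabla^\Sigma\Theta$ and the equation $\Delta_g\Theta = 0$ require a parallel holomorphic volume form, i.e.\ a Calabi--Yau ambient; in a general K\"ahler surface the mean curvature one-form $\alpha = \iota_{\vec H}\omega$ satisfies $d\alpha = F^*\mathrm{Rc}$, which is \emph{not} closed, so $\alpha$ is not even locally exact and no phase function exists. The paper states this explicitly in the introduction as the main reason the methods of \cite{CW1,CW2} do not transfer. Consequently your appeal to the $\epsilon$-regularity of \cite{CW2} is also illegitimate here: that result is proved only in $\mathbb C^n$, precisely because it bootstraps between the potential $u$ and the phase $\Theta = \sum \arctan\lambda_i$, a representation unavailable in curved ambient. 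Your neck analysis, which recasts Willmore energy as the Dirichlet energy of the harmonic $\Theta$, collapses for the same reason.

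The paper's substitute is to work with the pair $(F,\alpha)$ as a coupled elliptic system: $\Delta F = \alpha * J(F)*\nabla F + A^M(F)*\nabla F*\nabla F$ together with $d\alpha = \mathrm{Rc}(F)*\nabla F*\nabla F$, $d^*\alpha = 0$. The small-energy input is the \emph{extrinsic} $\epsilon$-regularity of \cite{CLi} for branched conformal immersions into $\mathbb R^N$ (via Nash embedding), which yields $W^{2,p}$ control of $F$ from small Willmore energy without any HSL structure; the HSL system is then used only to bootstrap to $C^{k,\beta}$ and to prove removable singularities. A second point: in the paper the bubbles are maps $\mathbb S^2\to M$, obtained by rescaling the \emph{domain} only, not by blowing up the target to $\mathbb C^2$ as you propose. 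This matters downstream (Theorems~\ref{compactness of HSL tori} and~\ref{Compactness of HSL tori in CP^2 with small area}), where one needs to classify HSL spheres in $M$ itself. Finally, the area identity and neck control are imported directly from \cite{CLi} rather than redone via a phase-function argument.
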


This generalizes the compactness theorem in \cite{CW2}, by allowing a general K\"ahler surface as the ambient space. Our approach is different from \cite{CW1,CW2} due to the fact that the Lagrangian phase angle of a Lagrangian submanifold in a Calabi-Yau manifold does not necessarily admit an expression as a sum of the $\arctan$s, even in a local Darboux coordinates. 

In light of the two-dimensional structure of the variation problem of the area functional,  a strong compactness theorem \cite[Proposition 4.7]{SW}, among other important results, is proved  for weakly conformal, minimizing Lagrangian maps with a uniform area bound, i.e. a subsequence converges in the $W^{1,2}_{loc}$-topology to a minimizing Lagrangian map; this is applied to develop a deep theory of existence and regularity for minimizing Lagrangian maps \cite{SW}.  We employ the bubble tree convergence of conformal mappings that parametrize the HSLs and use  the construction in \cite{CLi}, while the bubble tree convergence for harmonic maps is first constructed in \cite{P} since the seminal work \cite{SU}. Theorem \ref{main thm} describes the singular points in the limit as branch points, and excludes the conical singularities in \cite{SW} since they have infinite Willmore energy. 
Without a uniform bound on the Willmore energies, Theorem \ref{main thm} fails: the sequence of HSL tori 
$ \left\{ \mathbb S^1(1) \times \mathbb S^1(1/n)\right\}$ in $\mathbb C^2$ 
has uniform bound on areas but not on the Willmore energies, and the limit is not a branched immersion.

\vspace{.2cm}

Next, we state a strong compactness theorem for branched conformal HSL tori in $\mathbb C^2$. 

\begin{thm} \label{compactness of HSL tori}
Let $\{ F_n : (\mathbb T^2, h_n) \to \mathbb C^2\}$ be a sequence of smooth branched conformal HSL immersions with uniformly bounded areas and Willmore energies. Assume that $0 \in F_n (\mathbb T^2) $ for all $n \in \mathbb N$. Then either $\{F_n\}$ converges to a point, or a subsequence of $\{F_n\}$ converges smoothly to a smooth branched conformally HSL immersion $F_\infty : (\mathbb T^2,h_\infty) \to \mathbb C^2$, and the corresponding  conformal structures of $h_n$ converge to the conformal structure of $h_\infty$.
\end{thm}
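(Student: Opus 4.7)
The plan is to reduce to Theorem \ref{main thm} and then exploit the specific geometry of $\mathbb C^2$ to rule out both sphere bubbles and the degeneration of the conformal structure of $\mathbb T^2$. First, combining $0 \in F_n(\mathbb T^2)$ with the uniform area and Willmore bounds, Simon's diameter inequality $\mathrm{diam}(F_n(\mathbb T^2))^2 \le C\, \mathrm{Area}(F_n) \int |\vec H_n|^2 d\mu_n$ forces all images $F_n(\mathbb T^2)$ into a fixed compact set of $\mathbb C^2$, so Theorem \ref{main thm} applies. After passing to a subsequence, either $\{F_n\}$ converges to a point, in which case we are done, or we obtain a bubble tree limit $F_\infty: \Sigma_\infty \to \mathbb C^2$ satisfying the area identity \eqref{area identity}.

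The key lemma is that any smooth branched conformal HSL immersion $F: S^2 \to \mathbb C^2$ is constant. Since $S^2$ is simply connected, the Lagrangian angle $\Theta$ is a single-valued smooth function on $S^2$ minus the finite branch point set $B$, and the HSL equation gives $\Delta_g \Theta = 0$ on $S^2 \setminus B$. The identity $J\vec H = \overline\nabla \Theta$ together with the finite Willmore energy yields $\int |\overline\nabla \Theta|^2 d\mu < \infty$, so by the standard removable singularity result for finite-energy harmonic functions on a punctured disk, $\Theta$ extends harmonically to all of $S^2$ and is therefore constant. Hence $F$ is minimal Lagrangian in $\mathbb C^2$, and the maximum principle applied to the Euclidean coordinate functions on $F(S^2)$ forces $F$ to be constant.

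Given this, the dichotomy for $\mathbb T^2$ is as follows. If $[h_n]$ remains in a compact subset of the moduli space of $\mathbb T^2$, pass to a subsequence with $h_n \to h_\infty$; since no sphere bubbles can form, $F_n \to F_\infty$ smoothly on $(\mathbb T^2, h_\infty)$ by the $\varepsilon$-regularity adapted from \cite{CW2}, $F_\infty$ is the desired branched conformal HSL immersion, and the conformal structures converge as required. Otherwise $[h_n]$ escapes every compact set of moduli space, a cycle on $(\mathbb T^2, h_n)$ shrinks to zero length, and in the bubble tree formalism the base degenerates into a nodal surface whose irreducible components are spheres, all constant by the key lemma. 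Combined with the constancy of any genuine sphere bubbles, the area identity \eqref{area identity} then gives $\mathrm{Area}(F_n) \to 0$, and Simon's diameter inequality together with $0 \in F_n(\mathbb T^2)$ forces $F_n$ to converge to the point $0$, which is the first alternative of the conclusion.

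The main obstacle is justifying the removable singularity step for $\Theta$ at branch points, where the mean curvature may a priori blow up; this requires showing that $d\Theta$ extends as a closed $L^2$ one-form across isolated points with vanishing periods, which follows from the $L^2$ bound on $\vec H$. A secondary difficulty is matching the Deligne--Mumford-type degeneration of flat tori under $[h_n] \to \infty$ in moduli space with the bubble tree formalism of Theorem \ref{main thm}, so as to conclude that the degenerate base contributes only constant components.
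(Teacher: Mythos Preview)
Your overall strategy matches the paper's: use Simon's diameter estimate to confine the images, invoke Theorem~\ref{main thm}, and use the nonexistence of branched conformal HSL spheres in $\mathbb C^2$ both to exclude bubbles and to rule out degeneration of the conformal structure. Your treatment of the degenerate case (showing $\operatorname{Area}(F_n)\to 0$ and hence $F_n\to 0$) is a legitimate variant of the paper's contrapositive argument.

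There is, however, one genuine gap. You write that ``since no sphere bubbles can form, $F_n\to F_\infty$ smoothly on $(\mathbb T^2,h_\infty)$ by the $\varepsilon$-regularity.'' The paper singles this step out explicitly: unlike the harmonic map case, the absence of nontrivial bubbles does \emph{not} by itself yield smooth convergence, because Willmore energy (unlike Dirichlet energy) can concentrate in necks and produce ghost bubbles. What is missing is an argument that the Willmore energy cannot concentrate. The paper supplies this by observing that on a flat torus every harmonic $1$-form is constant, so $\alpha_n=\alpha^n_x\,dx+\alpha^n_y\,dy$ with $|\tilde\alpha_n|^2\le C/\tau_2^n\le C$; hence for every measurable $U\subset\mathbb T^2$,
\[
\int_U |\vec H_n|^2\,d\mu_n=|\tilde\alpha_n|^2\,A_n(U)\le C\,A_n(U).
\]
Since the area identity together with the absence of bubbles already forbids area concentration, this bound rules out Willmore concentration, and only then does Corollary~\ref{W^ k,2 bounds given epsilon reg.} give uniform $C^{k,\beta}$ control and smooth convergence. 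You need to insert this argument.

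A smaller point: in your key lemma you assert that $\Theta$ is single-valued on $\mathbb S^2\setminus B$ because $\mathbb S^2$ is simply connected, but $\mathbb S^2$ minus two or more points is not simply connected. The clean fix (and what the paper does via Corollary~\ref{no HSL S^2}) is to first extend the closed $L^2$ one-form $\alpha$ smoothly across the branch points, then use $H^1(\mathbb S^2)=0$ and the conformal invariance of $d^*$ on $1$-forms in dimension two to conclude $\alpha$ is harmonic for the round metric, hence zero.
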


It is well-known that there is no immersed HSL sphere in $\mathbb C^2$. We extend this to the case of branched conformal HSL $2$-sphere. This observation is essential in proving Theorem \ref{compactness of HSL tori}, since it implies that in the bubble tree convergence in Theorem \ref{main thm}: (i)  non-trivial bubble cannot be formed in the limiting process and (ii) the sequence $\{h_n\}$ of conformal structures does not degenerate. A similar argument is used in \cite{CMa} for Lagrangian self-shrinking tori in $\mathbb C^2$. 

\vspace{.2cm}

When $M = \mathbb{CP}^2$ with the Fubini-Study metric, using that the only HSL 2-sphere is the double cover of a totally geodesic $\mathbb{RP}^2$ in $\mathbb{CP}^2$ \cite{Yau}, we can prove 

\begin{thm} \label{Compactness of HSL tori in CP^2 with small area}
Let $\{ F_n : (\mathbb T^2, h_n) \to \mathbb{CP}^2\}$ be a sequence of smooth branched conformal HSL immersions. Assume that there are positive constants $C_1 <2 \operatorname{Area} (\mathbb{RP}^2)$ and $C_2$ so that 
\begin{equation} \label{Bounded area and w} \operatorname{Area}(F_n)\le C_1 , \ \ \ \mathcal W(F_n) \le C_2
\end{equation}
for all $n\in \mathbb N$. Then either $\{F_n\}$ converges to a point, or a subsequence of $\{F_n\}$ converges smoothly to a smooth branched conformal immersion $F_\infty : (\mathbb T^2,h_\infty) \to \mathbb{CP}^2$, and the corresponding conformal structures of $h_n$ converge to the conformal structure of $h_\infty$.
\end{thm}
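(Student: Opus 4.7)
The plan is to combine the bubble tree convergence of Theorem~\ref{main thm} with Yau's classification of HSL $2$-spheres in $\mathbb{CP}^2$, following the same template as the proof of Theorem~\ref{compactness of HSL tori}. The area threshold $C_1 < 2\operatorname{Area}(\mathbb{RP}^2)$ is chosen precisely so that no HSL $2$-sphere in $\mathbb{CP}^2$ can appear as a bubble in the limit. First I apply Theorem~\ref{main thm} to $\{F_n\}$: since $\mathbb{CP}^2$ is compact and the areas and Willmore energies are uniformly controlled by (\ref{Bounded area and w}), a subsequence either collapses to a point (and we are done), or bubble-tree converges to a continuous map $F_\infty : \Sigma_\infty \to \mathbb{CP}^2$ whose restriction to each component of $\Sigma_\infty$ is a smooth branched conformal HSL immersion, with the area identity (\ref{area identity}) holding.

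The core step is to exclude non-trivial bubbles. Assume, toward a contradiction, that some component $S \subset \Sigma_\infty$ is a bubble sphere on which $F_\infty|_S$ is non-constant. I would extend the result of \cite{Yau} from smooth \emph{immersed} HSL $2$-spheres to smooth \emph{branched conformal} HSL maps $S^2 \to \mathbb{CP}^2$, showing that any such map must be a branched cover of a totally geodesic $\mathbb{RP}^2$. Since any continuous map $S^2 \to \mathbb{RP}^2$ has even degree, this forces
\[ \operatorname{Area}(F_\infty|_S) \geq 2\operatorname{Area}(\mathbb{RP}^2). \]
Combined with (\ref{area identity}) this would give
\[ 2\operatorname{Area}(\mathbb{RP}^2) \leq \operatorname{Area}(F_\infty|_S) \leq \operatorname{Area}(F_\infty) = \lim_{n\to\infty} \operatorname{Area}(F_n) \leq C_1, \]
contradicting $C_1 < 2\operatorname{Area}(\mathbb{RP}^2)$. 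Hence no non-trivial bubble sphere appears, so $\Sigma_\infty$ reduces to its base component and no area is lost.

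With bubbling excluded, I rule out degeneration of the conformal structures $\{[h_n]\}$ on $\mathbb{T}^2$. If $[h_n]$ were to leave every compact subset of moduli space, the torus would pinch along a simple closed geodesic, and the standard conformal rescaling argument in the pinching collar (cf.\ \cite{CLi,P,SU}) would produce at least one non-trivial bubble sphere, contradicting the previous paragraph. Along a further subsequence, constant-curvature representatives $h_n$ therefore converge smoothly on $\mathbb{T}^2$ to a limit $h_\infty$. The small-Willmore-energy regularity used in the proof of Theorem~\ref{main thm} then upgrades the bubble-free convergence to smooth convergence of $F_n$ to a smooth branched conformal HSL immersion $F_\infty : (\mathbb{T}^2, h_\infty) \to \mathbb{CP}^2$, as asserted.

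The step I expect to be the main obstacle is the upgrade of \cite{Yau} from smooth immersed HSL spheres to smooth branched conformal HSL spheres, paralleling the corresponding extension for $\mathbb{C}^2$ mentioned after Theorem~\ref{compactness of HSL tori}. The approach I would take is to exploit the Willmore bound together with conformality to control the local density and multiplicity at each branch point via a Li--Yau type inequality, combine this with the Hamiltonian stationary equation $\Delta_g \Theta = 0$ and a removable singularity argument to produce a smooth HSL immersion $S^2 \to \mathbb{CP}^2$ possibly off finitely many branch points, and then invoke Yau's classification on the resulting smooth piece. Once this extension is in place, the remainder of the argument is a direct translation of the proof of Theorem~\ref{compactness of HSL tori}, with the obstruction ``no HSL sphere in $\mathbb{C}^2$'' replaced by the quantized area gap $2\operatorname{Area}(\mathbb{RP}^2)$ in $\mathbb{CP}^2$.
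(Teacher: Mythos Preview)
Your overall architecture is correct and matches the paper: apply Theorem~\ref{main thm}, use the area identity together with the area gap $2\operatorname{Area}(\mathbb{RP}^2)$ to exclude spherical components, then argue as in Theorem~\ref{compactness of HSL tori} for non-degeneracy of conformal structures and the smooth upgrade.

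The gap is in your proposed extension of Yau's theorem to branched conformal HSL $2$-spheres in $\mathbb{CP}^2$. Two points. First, the equation $\Delta_g\Theta=0$ is not available here: a global Lagrangian phase angle $\Theta$ exists only in the Calabi--Yau setting, and $\mathbb{CP}^2$ is not Calabi--Yau. The correct HSL system is $d^*\alpha=0$, $d\alpha=F^*\operatorname{Rc}$ for the mean curvature $1$-form $\alpha$. Second, your plan to use Li--Yau and a removable-singularity argument to ``produce a smooth HSL immersion'' and then quote Yau on the immersed piece cannot work as stated: the removable singularity result (Proposition~\ref{H extends across branched points}) extends $F$ and $\alpha$ smoothly across branch points, but the extension is still a \emph{branched} conformal immersion, not an immersion; branch points need not disappear, and Yau's theorem applies to closed immersed submanifolds, not to open complements of branch points.

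The paper bypasses both issues by arguing directly on the branched immersion. Since $\mathbb{CP}^2$ is K\"ahler--Einstein, Corollary~\ref{no HSL S^2} applies: $\alpha$ is harmonic on $\mathbb S^2$, hence $\alpha\equiv 0$, so the bubble is a branched conformal \emph{minimal} Lagrangian sphere. One then uses the Chern--Wolfson holomorphic cubic differential $C(z)\,dz^3$ with $C(z)=4\,\omega(\overline\nabla_{\partial_z}\partial_z,\partial_z)$; this is holomorphic for any branched conformal minimal immersion into a space of constant holomorphic sectional curvature, is smooth across branch points, and vanishes identically on $\mathbb S^2$ since there are no nonzero holomorphic cubic differentials there. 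Hence $A\equiv 0$, the sphere is totally geodesic, and being Lagrangian it is a branched cover of $\mathbb{RP}^2$, giving the degree-$\ge 2$ area bound you need. Replace your proposed Li--Yau/$\Theta$ route with this two-step argument and the rest of your outline goes through.
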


The paper is organized as follows. In section \ref{Background}, we discuss some background in Lagrangian submanifolds, surface theory and the bubble tree convergence. In section \ref{main technical section}, we prove a $C^k$ estimates and a removable singularity theorem for branched conformal HSL immersions. We prove Theorem \ref{main thm} in section \ref{proof of main thm}, where the bubble tree is constructed. In the last section,  we derive Theorems \ref{compactness of HSL tori}, \ref{Compactness of HSL tori in CP^2 with small area}. 

\vspace{.2cm}

{\it Acknowledgement.} Both authors are grateful to Professor Yuxiang Li for helpful discussion on the bubble tree convergence and to Professor Martin Man-chun Li for arranging a visit to CUHK, where a part of the work was carried out, in August of 2019. The first author was partially supported by an NSERC Discovery Grant (22R80062) and a grant (No. 562829) from the Simons Foundation. 

\section{Background} \label{Background}
\subsection{Lagrangian immersions} Let $(M, \omega, \bar g, J)$ be a smooth K{\"a}hler manifold with complex dimension $n$. By definition, $J$ is an integrable complex structure, $\omega$ is a closed 2-form and $\omega, \bar g, J$ satisfy
\begin{equation} \label{j, g comp.}
\bar g(X, Y) = \bar g(JX, JY)
\end{equation}
and 
\begin{equation} \label{w, j, g comp.}
\omega(X, Y) = \bar g(JX, Y)
\end{equation}
for all tangent vectors $X, Y$. 

Let $S$ be an orientable real $n$ dimensional manifold. An immersion $F : S \to M$ is Lagrangian if $F^*\omega = 0$. By (\ref{w, j, g comp.}), this is equivalent to that the almost complex structure $J$ maps the tangent space of $F$ to its normal space. 
A smooth vector field $X$ on $M$ is Lagrangian (resp. Hamiltonian) if the 1-form 
\begin{equation} \label{def of alpha_X} 
\alpha_X := \iota_X \omega
\end{equation}
is closed (resp. exact). It follows from the Cartan's formula that if $X$ is Lagrangian and $\{ \psi_t : t\in (-\varepsilon , \varepsilon )\}$ is the one-parameter group of diffeomorphisms generated by $X$, then $\psi_t \circ F$ is also a Lagrangian immersion. Using (\ref{w, j, g comp.}) we can verify that a vector field $X$ on $M$ is Hamiltonian if and only if 
\begin{equation} \label{Ham is JDf}
X = J \overline\nabla f
\end{equation}
for some smooth function $f: M\to \mathbb R$. 

A Lagrangian immersion $F:S\to M$ is called Hamiltonian stationary, or HSL for simplicity, if it is a critical point of the volume functional among all compactly supported Hamiltonian variations. By the first variation formula for volume and \eqref{Ham is JDf}, that $S$ is HSL is equivalent to 
\begin{equation} \label{Ham eqn as vector} 
\int_S \bar g( \vec H , J\overline\nabla f) \,d\mu = 0,  \ \ \ \forall f\in C_c^\infty(M),
\end{equation}
where $\vec H$ is the mean curvature vector of the immersion and $d\mu$ is the volume element in the metric $g=F^*\bar{g}$, as demonstrated in \cite{Oh1}. 

For an immersion $F:S\to M$, define a 1-form on $S$, which will be called the mean curvature 1-form $\alpha:= \alpha_{\vec H}$, by 
\begin{equation} \label{def of alpha}
 \alpha(Y)= \omega (\vec H, F_* Y) 
\end{equation}
for all tangent vector $Y$ of $S$. Using (\ref{j, g comp.}), (\ref{w, j, g comp.}) and (\ref{def of alpha}), when $F$ is a Lagrangian immersion, we have 
\begin{equation} \label{H . JDf = - alpha . df}
\begin{split}
\bar g (\vec H, J \overline\nabla f) &= -\bar g (J\vec H, \overline{\nabla} f) \\
&= -\omega (\vec H, (\overline{\nabla} f)^T) \\
&= -\alpha ( \nabla f|_S)\\
&= -\langle\alpha, df|_S\rangle_g,
\end{split}
\end{equation}
where $\nabla$ is the pullback connection $F^*\overline{\nabla}$ and $(\overline{\nabla}f)^T$ is the tangential part along $F(S)$. Thus (\ref{Ham eqn as vector}) is equivalent to 
\begin{equation}
\int_S \langle \alpha, df\rangle_g \,d\mu = 0, \ \ \ \forall f\in C^\infty_c(S).
\end{equation}
Thus the mean curvature 1-form $\alpha$ satisfies $d^* \alpha =0$ when $F$ is a HSL immersion. On the other hand, it is proved in \cite{D} that any Lagrangian immersion in a K{\"a}hler manifold $M$ satisfies $d\alpha = F^*\operatorname{Rc}$, where $\operatorname{Rc}$ is the Ricci 2-form of $(M, \omega, J, \bar g)$. Hence the mean curvature 1-form satisfies an elliptic system 
\begin{equation} \label{elliptic system for alpha_H}
\begin{cases}
d\alpha = F^* \operatorname{Rc}, \\
d^*\alpha  =0. 
\end{cases}
\end{equation}
When $(M, \omega, \bar g, J)$ is K{\"a}hler-Einstein, it follows from \eqref{elliptic system for alpha_H} that $\alpha$ is a harmonic 1-form on $S$ since $F^*\operatorname{Rc} = F^*(c\omega)$ vanishes on $S$ for $F$ is Lagrangian. 

\subsection{Basic surface theory}   
Let $(S, g)$ be a closed orientable real 2-dimensional Riemannian surface. The genus of $S$ is denoted ${\mbox g}_S$. By the uniformization theorem, there is a conformal diffeomorphism $\phi: (\Sigma,h) \to (S, g)$, where
\begin{itemize}
\item[(a)] when ${\mbox g}_S=0$, $\Sigma$ is the two sphere $\mathbb S^2$ with the round metric $h$, 
\item[(b)] when ${\mbox g}_S=1$, $\Sigma$ is the torus $\mathbb T^2:= \mathbb S^1 \times \mathbb S^1$ and $h$ is given by
\begin{equation} \label{h on T^2 explicit formula}
h = \begin{pmatrix} 1 & \tau_1 \\ 0 & \tau_2 \end{pmatrix}^t \begin{pmatrix} 1 & \tau_1 \\ 0 & \tau_2\end{pmatrix},
\end{equation}
where $\tau = \tau_1+ \sqrt{-1}\tau_2$ satisfies
\begin{equation} \label{range of tau}
-\frac 12 \le \tau_1 \le \frac 12, \ \tau_2 >0, \  \tau_1^2 + \tau_2^2 \ge 1 \text{ and } \tau_1 \ge 0 \text{ whenever } \tau_1^2+ \tau_2^2 = 1.
\end{equation}
\item[(c)] when ${\mbox g}_S \ge 2$, $\Sigma$ is a closed orientable surface of genus ${\mbox g}_S$ and $h$ is a metric on $\Sigma$ with constant Gauss  curvature $-1$.
\end{itemize}
The metric described above will be called a model metric.

Given any immersion $F : S \to M$, using the induced metric $g$, there is a conformal diffeomorphism $\phi : (\Sigma,h) \to (S, g)$. By considering $F\circ \phi$, from now on we assume that $F : (\Sigma, h) \to (M, \bar g)$ is a conformal immersion from $\Sigma$ with a model metric. 

When studying the compactness of the space of HSL immersions, we will need to consider objects with singularities. 

\begin{dfn}
Let $(\Sigma, h)$ be a Riemann surface. A smooth mapping $F: \Sigma \to (M,\bar g)$ is called a branched conformal immersion, if $g:=F^*\bar g = \lambda h $, where $\lambda \ge 0$ and is zero only at finitely many points. The points in $\Sigma$ where $\lambda = 0$ is called the branch points of $F$. The set of branch points is denoted $\mathscr B$.
\end{dfn}

\begin{dfn}
A branched conformal immersion $F : \Sigma \to (M, \omega, \bar g, J)$ is called Lagrangian if $F^*\omega = 0$. If $F$ is Lagrangian, it is called HSL if the mean curvature 1-form $\alpha$ satisfies $d^*_g \alpha = 0$ away from $\mathscr B$. 
\end{dfn}

Let $\mathbb D(r) = \{ z\in \mathbb C : |z| <r\}$ and $ \mathbb D = \mathbb D(1)$. 
Let $\delta=dx^2+dy^2$ be the standard metric on $\mathbb D(r)$ and $F : (\mathbb D (r), \delta) \to (M, \bar g)$ be a branched conformal immersion. By the conformality,
\begin{equation}
\bar g\left( \frac{\partial F}{\partial x}, \frac{\partial F}{\partial x}\right) = \bar g\left( \frac{\partial F}{\partial y}, \frac{\partial F}{\partial y}\right), \ \ \ \bar g\left( \frac{\partial F}{\partial x}, \frac{\partial F}{\partial y}\right)= 0
\end{equation}
This implies that 
$$
\lambda = \frac 12 |\nabla F|_{\bar g}^2:= \frac 12 \left( \left| \frac{\partial F}{\partial x}\right|^2_{\bar g} + \left| \frac{\partial F}{\partial y}\right|^2_{\bar g}\right)=\left|\frac{\partial F}{\partial x}\right|^2_{\bar g}= \left|\frac{\partial F}{\partial y}\right|^2_{\bar g}
$$
and  
\begin{equation} \label{g in terms of DF}
g:= F^*\bar g = \frac 12 |\nabla F|_{\bar g}^2 \,\delta, \ \ \ g^{-1} = 2 |\nabla F|_{\bar g}^{-2} \,\delta.
\end{equation}

\begin{rem}
In general, for any branched conformal immersion $F :(\Sigma, h) \to (M, \bar g)$, let $\theta : \mathbb D(r) \to \theta (\mathbb D(r)) \subset \Sigma$ be a conformal diffeomorphism. Then $F \circ \theta$ is a branched conform immersion from $\mathbb D(r)$ and thus (\ref{g in terms of DF}) is applicable to this immersion. 
\end{rem}

For any branched conformal immersion $F : \Sigma\to (M, \bar g)$. The Willmore energy is defined as 
\begin{equation} \label{W definition}
\mathcal W (F) = \frac 14 \int_\Sigma |H|_g^2 d\mu_g.
\end{equation}
When $F$ is Lagrangian in addition, we also have
\begin{equation} \label{W definition as alpha}
\mathcal W (F) = \frac 14 \int_\Sigma |\alpha |_g^2 d\mu_g.
\end{equation}

\subsection{Bubble tree convergence}
In this subsection, we recall the definition of bubble tree convergence. First we recall the definition of stratified surface (\cite{CLi}, \cite{CT}). 
\begin{dfn}
 Let $(\Sigma, d)$ be a connected compact metric space. We call $\Sigma$ a stratified surface with singular set $P$ if $P \subset \Sigma$ is a finite set such that
(i)  $(\Sigma \setminus P, d)$ is a smooth Riemann surface without boundary (possibly disconnected) and $d$ is given by a smooth Riemann metric $h$ on $\Sigma\setminus P$, and (ii) For each $p\in P$, there is $\delta >0$ so that $B_\delta (p) \cap P =\{p\}$ and $B_\delta (0) \setminus \{p\}$ is a union of $m(p)$ topological disks with its center deleted, where $1<m(p)<\infty$, and on each punctured disk, the metric $h$ can be extended smoothly to the whole disk. 
\end{dfn}
Next we recall the definition of bubble tree convergence (\cite{CLi}, see also \cite{P}).
\begin{dfn} \label{definition of bubble tree convergence}
Let $\{F_n : \Sigma \to M\}$ be a sequence of smooth mapping to $M$. Let $\Sigma_\infty$ be a stratified surface. We say that $\{F_n\}$ converges to $F_\infty : \Sigma_\infty \to M$ in the sense of bubble tree if for each $n\in \mathbb N$, there are open sets $U_n \subset \Sigma$ and $V_n \subset \Sigma_\infty$ so that 
\begin{enumerate}
\item $\Sigma_\infty \setminus \bigcup_n V_n = P$, and $\Sigma_\infty \setminus V_n$ is a union of topological disks with finitely many small disks removed.
\item Each $\Sigma\setminus U_n$ is a smooth surface with boundary, possibly disconnected. Moreover, $F_n(\Sigma \setminus U_n)$ converges to $F_\infty(P)$ in Hausdorff distance.
\item There is a sequence of diffeomorphisms $\varphi_n:  U_n \to V_n$, such that for any $\Omega \subset\subset \Sigma_\infty \setminus P$, $F_n \circ \varphi^{-1}_n$ converges to $F_\infty |_\Omega$ smoothly in $\Omega$.
\end{enumerate}
\end{dfn}

\section{Small Energy Regularity and Removable Singularity} \label{main technical section}
Let $(M, \omega, \bar g, J)$ be a complete K{\"a}hler surface. By the Nash embedding theorem, we may assume that $(M, \bar g)$ is isometrically embedded into an Euclidean space $\mathbb R^N$. 

In general, for any immersion $F : V \to M\subset \mathbb R^N$ defined in a local coordinates $(x^1, \cdots, x^n)$ and $g = (g_{ij})$, the equation $\vec H =\operatorname{tr} \nabla dF$ is locally given by
\begin{equation} \label{H = DDF}
\Delta_{g} F - g^{ij} A^M\left( \ppx i, \ppx j\right) = \vec H,
\end{equation}
where $\Delta_g F = ( \Delta_g F^1 , \cdots, \Delta_g F^N)$ and $A^M$ is the second fundamental form of $M$ in $\mathbb R^N$.

For a branched conformal immersion $F : \mathbb D(r) \to M$, by (\ref{g in terms of DF}) we have 
\begin{equation} \label{Lap conformal change}
\Delta_g F = 2 |\nabla F|_{\bar g}^{-2} \Delta  F,
\end{equation}
where $\Delta = \frac{\partial^2}{\partial x^2 }+\frac{\partial^2}{\partial y^2 }$. When $F$ is also Lagrangian, the normal bundle is spanned by $\left\{ J\frac{\partial F}{\partial x}, J\frac{\partial F}{\partial y}\right\}$. Hence we can write 
$$ \vec H = H^x J(F) \frac{\partial F}{\partial x} + H^y J(F) \frac{\partial F}{\partial y}$$
for some functions $H^x, H^y : \mathbb D(r) \to \mathbb R$. 
If we write $\alpha = \alpha _x dx+ \alpha_y dy$, then by (\ref{g in terms of DF})
\begin{equation}
\alpha_x =\frac 12 |\nabla F|_{\bar g}^2 \,H^x,\ \  \alpha_y = \frac 12 |\nabla F|_{\bar g}^2 \,H^y.
\end{equation}
Together with (\ref{H = DDF}) and (\ref{Lap conformal change}), we have 
\begin{equation} \label{H = DDF in for epsilon reg.}
\Delta  F =\alpha_x J( F)\frac{\partial F}{\partial x}+ \alpha_y J ( F) \frac{\partial F}{\partial y} + A^M \left(\frac{\partial F}{\partial x}, \frac{\partial F}{\partial x}\right) + A^M\left( \frac{\partial F}{\partial y}, \frac{\partial F}{\partial y}\right).
\end{equation}

\vspace{.2cm}

Next, we recall the $\varepsilon $-regularity result in \cite[Proposition 2.1]{CLi}). 
For any subset $U\subset \mathbb D(r)$, denote 
\begin{equation} \label{dfn of ex. willmore} 
W_{\mathbb R^N} (F, U) = \frac 14 \int_U |\vec H_{\mathbb R^N}|^2 d\mu
\end{equation}
here $\vec H_{\mathbb R^N}$ is the mean curvature vector of $F : \mathbb D(r) \to M\hookrightarrow \mathbb R^N$. 

\begin{prop} \label{epsilon reg. proposition}
For any $p \in (1, 2)$, $R>r>0$ and $N\in \mathbb N$, there are constants $\varepsilon _0>0$ and $C>0$ depending on $p, R, r, N$, such that if $F : \mathbb D(R) \to \mathbb R^N$ is a branched conformal immersion into $\mathbb R^N$ with 
\begin{equation} \label{W le epsilon}
 W_{\mathbb R^N} (F, \mathbb D(R)) < \varepsilon _0^2.
\end{equation}
then 
\begin{equation} \label{W 2,p epsilon reg.}
 \| \nabla F\|_{W^{1,p}(\mathbb D(r))} \le C \| \nabla F\|_{L^2 (\mathbb D(R))}.
\end{equation}
\end{prop}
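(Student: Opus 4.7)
The plan is to reduce everything to the equation that a branched conformal immersion into Euclidean space satisfies: specializing (\ref{H = DDF}) to $M=\mathbb R^N$ (so that $A^M\equiv 0$) and invoking the conformal identity (\ref{Lap conformal change}), one obtains
\begin{equation*}
\Delta F\;=\;\tfrac{1}{2}|\nabla F|^{2}\,\vec H_{\mathbb R^N}
\end{equation*}
classically on $\mathbb D(R)\setminus\mathscr B$, and distributionally on all of $\mathbb D(R)$ since the branch locus $\mathscr B$ is a finite set and hence removable for $W^{1,p}$-solutions of the Poisson equation when $p<2$. In these coordinates the hypothesis (\ref{W le epsilon}) reads
\begin{equation*}
\int_{\mathbb D(R)} |\vec H_{\mathbb R^N}|^{2}\,|\nabla F|^{2}\,dx\,dy\;\le\;8\varepsilon_0^{2},
\end{equation*}
and Cauchy--Schwarz then gives the borderline $L^{1}$ bound $\|\Delta F\|_{L^{1}(\mathbb D(R))}\le C\varepsilon_0\|\nabla F\|_{L^{2}(\mathbb D(R))}$, which is scale-invariant and hence essentially tight.

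To upgrade to $W^{1,p}$ control on $\mathbb D(r)$ for a prescribed $p\in(1,2)$, I would fix nested radii $r=\rho_{0}<\rho_{1}<\cdots<\rho_{k}=R$ (with $k$ chosen in terms of $p$) together with cut-offs $\eta_{j}\in C_c^{\infty}(\mathbb D(\rho_{j+1}))$ equal to $1$ on $\mathbb D(\rho_{j})$, and iterate. The crucial first step exploits the conformal identities $\Delta F\cdot F_{x}=\Delta F\cdot F_{y}=0$, which give the right-hand side $\tfrac{1}{2}|\nabla F|^{2}\vec H_{\mathbb R^N}$ a hidden div--curl (Jacobian) structure; a Wente / compensated-compactness argument then uses smallness of $\varepsilon_0$ to absorb the nonlinear coupling and upgrades $\nabla F$ from $L^{2}(\mathbb D(\rho_{k}))$ to the Lorentz class $L^{2,1}(\mathbb D(\rho_{k-1}))$. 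From this point on $|\nabla F|^{2}\vec H_{\mathbb R^N}\in L^{q}$ for some $q>1$ by H\"older, so standard Calder\'on--Zygmund $L^{q}$-theory for $\Delta$ yields $F\in W^{2,q}(\mathbb D(\rho_{k-2}))$; feeding $W^{2,q}\hookrightarrow W^{1,q'}$ back into the equation improves $q$ at each stage, and after finitely many iterations the desired $W^{1,p}(\mathbb D(r))$ estimate is reached, with constants depending only on $p,R,r,N$ and on $\|\nabla F\|_{L^{2}(\mathbb D(R))}$.

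The principal obstacle is the first, scale-invariant improvement from $L^{2}$ to $L^{2,1}$: since both the nonlinear right-hand side and the Lebesgue scaling are critical, the estimate can only close when $\varepsilon_0$ is chosen strictly below a universal threshold that permits the nonlinear coupling to be absorbed into the left-hand side. In codimension one this is a classical Wente application, but in codimension $N-2>1$ one must either (i) build a H\'elein Coulomb moving frame on the normal bundle, whose existence rests on smallness of $\int|\vec A|^{2}$ (derivable from the Willmore energy via the Gauss equation in $\mathbb R^{N}$), or (ii) bypass the frame and run a direct Caccioppoli-plus-iteration argument using only the orthogonality of $\Delta F$ to $F_{\ast}T\mathbb D$; the route in \cite{CLi} follows the latter. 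Either way, the stated estimate holds once $\varepsilon_0$ is chosen sufficiently small in terms of $p,R,r,N$, the dependence on $N$ entering only through the codimension-dependent constants in the compensated-compactness step.
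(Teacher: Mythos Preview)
This proposition is not proved in the present paper at all: it is quoted directly as \cite[Proposition 2.1]{CLi} and used thereafter as a black box. There is thus no in-paper argument against which to compare your proposal; you are effectively sketching the proof of the cited result.

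As such a sketch, your outline is in the right spirit and broadly matches the strategy in \cite{CLi} (which in turn draws on M\"uller--\v{S}ver\'ak and Kuwert--Li): reduce to $\Delta F=\tfrac12|\nabla F|^2\vec H_{\mathbb R^N}$, exploit the Jacobian/div--curl structure hidden in the conformality relations to get the critical first improvement under smallness of $\varepsilon_0$, then bootstrap. One point deserves correction: your parenthetical remark that smallness of $\int|A|^2$ is ``derivable from the Willmore energy via the Gauss equation'' is not valid on a disk, since $|A|^2=|H|^2-2K$ and the Gauss--Bonnet boundary term $\int_{\partial}\kappa_g$ is uncontrolled a priori. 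This is exactly why your route (i) via a H\'elein frame is not available here and why route (ii)---working directly with the div--curl structure coming from $|F_x|^2=|F_y|^2$, $F_x\cdot F_y=0$ without ever passing through $\int|A|^2$---is the one that \cite{CLi} takes. You identify this correctly at the end, so the sketch stands once that parenthetical is dropped.
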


Since $(M, \overline{g})$ is isometrically embedded into $\mathbb R^N$, 
Proposition \ref{epsilon reg. proposition} is applicable when the image of $F$ is contained in a bounded region $K$ in $M$ by observing 
\begin{equation*}
\vec H_{\mathbb R^N} = \vec H + A^M(e_1, e_1) + A^M(e_2, e_2),
\end{equation*}
where $\{e_1, e_2\}$ is some orthonormal basis of $F_*T_x \mathbb D(r)$. 

\vspace{.2cm}

Next we show that the small energy condition (\ref{W le epsilon}) is sufficient to control all higher derivatives of a HSL immersion: 

\begin{cor} \label{W^ k,2 bounds given epsilon reg.}
Let $F : \mathbb D(1) \to M$ be a smooth branched conformal HSL immersion into a complete K{\"a}hler manifold $M$, which is isometrically embedded in $\mathbb R^N$. Assume that the image of $F$ lies in a compact set $K$ and (\ref{W le epsilon}) is satisfied for $F$ and $p = 12/7<2$. Then for any $k \in \mathbb N$, there is $C_k$ depending only on $k$, $(M, \bar g)$ and $K$ such that
\begin{equation}
\|F\|_{C^{k, \beta}(\mathbb D(1/2))} \le C_k \left( \int_{\mathbb D(1)} |\nabla F|^2 d\mu + \int_{\mathbb D(1)} |\vec H|^2 d\mu +1 \right)^2.
\end{equation}
for all $k \in \mathbb N$. Here $\beta \in (0,1)$ is fixed. 
\end{cor}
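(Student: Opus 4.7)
The plan is to bootstrap regularity by combining interior elliptic estimates for the two coupled equations at hand: the semilinear second-order equation \eqref{H = DDF in for epsilon reg.} for $F$, in which $\Delta F$ is expressed in terms of $\alpha$ and terms quadratic in $\nabla F$, and the first-order Hodge system $d\alpha = F^*\operatorname{Rc}$, $d^*\alpha = 0$ from \eqref{elliptic system for alpha_H} for the mean curvature $1$-form $\alpha$.

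First I would apply Proposition \ref{epsilon reg. proposition} with $p = 12/7$, using the assumed smallness $W_{\mathbb R^N}(F, \mathbb D(1)) < \varepsilon_0^2$, to obtain $F \in W^{2, 12/7}(\mathbb D(1/2))$ with norm controlled by $\|\nabla F\|_{L^2(\mathbb D(1))}$. Two-dimensional Sobolev embedding then yields $\nabla F \in L^{12}$ on a slightly smaller disk. Separately, from $\alpha_x = \tfrac{1}{2}|\nabla F|^2 H^x$ (and its $y$-analogue) one obtains the pointwise identity $|\alpha|^2 = \tfrac{1}{2}|\nabla F|^2 |\vec H|^2$, so that $\alpha \in L^2(\mathbb D(1/2))$ with norm bounded by the square root of the Willmore energy. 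This provides the base for the iteration.

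Next I would iterate: at each stage, $F^*\operatorname{Rc}$ is bounded pointwise by a constant depending on $K$ times $|\nabla F|^2$, hence lies in $L^q$ for the current integrability of $\nabla F$. Interior elliptic estimates for the first-order system $d\alpha = F^*\operatorname{Rc}$, $d^*\alpha = 0$ (equivalently, the $\bar\partial$-equation $\partial_{\bar z}(\alpha_x - i\alpha_y) = -iR/2$ with $R = F^*\operatorname{Rc}(\partial_x, \partial_y)$) then improve $\alpha$ by one full derivative. Feeding the improved $\alpha$ back into \eqref{H = DDF in for epsilon reg.} and applying the interior $L^p$ estimate for the Laplacian yields better $W^{2,p}$ control of $F$, hence better $\nabla F$ by Sobolev embedding. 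Alternating these two steps, with H\"older interpolation used on the bilinear products $\alpha \cdot \nabla F$ and $|\nabla F|^2$, the integrability exponents grow geometrically, and after finitely many rounds one reaches $\nabla F, \alpha \in L^\infty \cap C^{0,\beta}$.

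Once this threshold is reached, Schauder estimates applied successively to \eqref{H = DDF in for epsilon reg.} and to the elliptic system for $\alpha$ produce the desired $C^{k,\beta}$ bound for every $k$. The main obstacle is quantitative: one must track how the estimate depends on the initial energies $\int |\nabla F|^2 d\mu$ and $\int |\vec H|^2 d\mu$ through the iteration, since each elliptic step contributes multiplicatively, and the bilinear nature of the right-hand sides in \eqref{H = DDF in for epsilon reg.} and in $F^*\operatorname{Rc}$ is what produces the polynomial (ultimately quadratic) dependence visible in the stated bound. A technical point worth noting is that although $F$ may have branch points where $|\nabla F|$ vanishes, both governing equations are written in flat conformal coordinates and never require inverting $|\nabla F|$, so the argument yields a genuine smooth estimate for $F$ itself regardless of the presence of branches.
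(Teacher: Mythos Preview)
Your proposal is correct and follows essentially the same bootstrapping strategy as the paper's proof: use the $\varepsilon$-regularity to place $\nabla F$ in a high $L^q$, feed $|\nabla F|^2$ into the first-order Hodge system for $\alpha$ to gain a derivative on $\alpha$, plug the improved $\alpha$ back into \eqref{H = DDF in for epsilon reg.}, and iterate until $C^{0,\beta}$ is reached, after which Schauder takes over. The paper organizes the opening moves slightly differently---it invokes Proposition~\ref{epsilon reg. proposition} at \emph{two} exponents ($p=4/3$ first, to get $\nabla F\in L^4$ and hence $\alpha\in W^{1,2}\subset L^{12}$, and then $p=12/7$ to get $\nabla F\in L^{12}$)---whereas you go straight to $p=12/7$; your route is marginally cleaner but equivalent, since $W^{1,12/7}\hookrightarrow L^{12}$ already dominates the $L^4$ bound. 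Your observation that $\|\alpha\|_{L^2}^2 = \int |\vec H|^2\,d\mu$ and your remark on branch points are both apt; the paper handles the latter by forward-referencing Proposition~\ref{H extends across branched points} to justify that $\alpha$ (and hence $\operatorname{div}\alpha=0$) extends smoothly across branch points, a point you allude to but should make explicit.
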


\begin{proof}
We will assume that $\alpha$ is smoothly defined across the branched points (This will be proved later, see Proposition \ref{H extends across branched points}).  
We can write (\ref{H = DDF in for epsilon reg.}) and (\ref{elliptic system for alpha_H}) as 
\begin{equation} \label{Delta F bootstraps corollary}
\Delta F =\alpha * J(F)* \nabla F + A^M(F) * \nabla F * \nabla F. 
\end{equation}
\begin{equation} \label{d, d* alpha bootstraps corollary}
\begin{cases}
d\alpha = \mbox{Rc}(F)* \nabla F *\nabla F \\d^* \alpha =0.
\end{cases}
\end{equation}
Note that we have $\nabla F * \nabla F$ on the RHS of (\ref{d, d* alpha bootstraps corollary}) since $\mbox{Rc}$ is a 2-form on $M$. Thus the pair $(F, \alpha)$ satisfies an elliptic system. We will show that (\ref{Delta F bootstraps corollary}) and (\ref{d, d* alpha bootstraps corollary}) are sufficient for a bootstrapping process. In the sequel, we will use $\| \cdot\|_{k,p}$ and $\|\cdot\|_{p}$ to denote the Sobolev norms and $L^p$-norms respectively. It is also understood that in the following inequalities, the smaller terms denote norms evaluated at a smaller open sets (which still strictly contain $\mathbb D(1/2)$), since we are applying Sobolev or the interior Schauder estimates. Note also that we can apply Proposition \ref{epsilon reg. proposition} for any $1<p\le 12/7$ by H{\"o}lder's inequality. 

First we recall the Sobolev inequality \cite[(7.26)]{GT}: if $p<2$, then
\begin{equation} \label{Sobolev ineq p<2}
\| u\|_{2p/(2-p)}  \le C(p) \, \| u\|_{1, p}. 
\end{equation}
Note that (\ref{Sobolev ineq p<2}) together with H{\"o}lder's inequality implies that for any $q>1$, 
\begin{equation} \label{Sobolev ineq q>1}
\| u\|_q \le C(q) \, \| u\|_{1,2}.
\end{equation}

By Proposition \ref{epsilon reg. proposition}, there is $C$ so that 
$$ \| \nabla F\|_{1, 4/3} \le C \| \nabla F\|_2.$$
By (\ref{Sobolev ineq p<2}) with $p=4/3$, 
$$ \| \nabla F \|_4 \le C \| \nabla F\|_2$$
therefore the RHS of (\ref{d, d* alpha bootstraps corollary}) satisfies 
$$ \| \mbox{Rc}(F) * \nabla F *\nabla F\|_2 \le  C \| \nabla F\|_2.$$
By the {\it a priori} estimates \cite[Theorem 6.28]{F} applied to (\ref{d, d* alpha bootstraps corollary}), 
$$ \| \alpha\|_{1, 2} \le C ( \| \nabla F\|_2 + \| \alpha\|_2). $$
Using (\ref{Sobolev ineq q>1}) with $q=12$,  
\begin{equation} \label{cor 3.1 L^12 bound of alpha}
 \| \alpha\|_{12} \le C \left( \| \nabla F\|_2 + \| \alpha\|_2 \right).
 \end{equation}
On the other hand, by Proposition \ref{epsilon reg. proposition} with $p=12/7$,
$$\| \nabla F\|_{1, 12/7} \le C \| \nabla F\|_2$$
and (\ref{Sobolev ineq p<2}) with $p=12/7$, 
\begin{equation} \label{cor 3.1 L^12 bound of F} 
\| \nabla F\|_{12} \le C \| \nabla F\|_2.
\end{equation}
Hence (\ref{cor 3.1 L^12 bound of alpha}), (\ref{cor 3.1 L^12 bound of F}) together with (\ref{Delta F bootstraps corollary}) imply that 
\begin{equation}
\begin{split}
\|\Delta F\|_6 &\le C \left(\| \nabla F\|_{12} \| \alpha\|_{12} + \| \nabla F\|_{12}^2\right) \\
&\le C \left( \| \nabla F\|^2_{2} + \|\alpha\|_2^2\right). 
\end{split}
\end{equation}
The $L^p$-estimates \cite[Theorem 9.11]{GT} yields 
\begin{equation} \label{bounds on nabla F which implies Holder}
\begin{split}
\| F\|_{2, 6} &\le C\left(\|F\|_6 + \|\nabla F\|^2_2 + \| \alpha \|_2^2\right) \\
& \le C\left(1 + \|\nabla F\|^2_2 + \| \alpha \|_2^2\right).
\end{split}
\end{equation}
Then 
$$
\| \mbox{Rc} * \nabla F * \nabla F\|_{1,2} \le C (1+ \|\nabla F\|_2 + \| \alpha\|_2)^2
$$ 
and it follows 
\begin{equation} \label{bounds on alpha which implies Holder}
 \| \alpha\|_{2, 2} \le  C\left(1 + \|\nabla F\|_2 + \| \alpha \|_2\right)^2
 \end{equation}
by the {\it a priori} estimates \cite[Theorem 6.28]{F} applied to (\ref{d, d* alpha bootstraps corollary}). 

Using (\ref{bounds on nabla F which implies Holder}), (\ref{bounds on alpha which implies Holder}) and the Sobolev embedding theorem, we can bound the $C^{0,\beta}$-norm of both $\nabla F$ and $\alpha$ for some $\beta \in (0,1)$. Applying the interior Schauder estimates \cite[Corollary 6.3]{GT} to (\ref{Delta F bootstraps corollary}), (\ref{d, d* alpha bootstraps corollary}), the corollary is proved.
\end{proof}

Next, we discuss removability of a point singularity of an HSL immersion from a punctured disk. The result is similar to \cite[Proposition 3.1]{CMa}.
 
\begin{prop} \label{H extends across branched points}
Let $F : \mathbb D \setminus \{0\} \to M$ be a smooth branched conformal HSL immersions into a complete K{\"a}hler manifold $(M, \omega, \bar g, J)$ with finite area, finite Willmore energy and finitely many branch points. Then $F$ and $\alpha$ can be smoothly extended to $\mathbb D$. 
\end{prop}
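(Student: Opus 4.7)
The plan is to establish the smooth extension in three stages: first extract a $W^{1,2}$ and $L^2$ extension from the energy bounds, then upgrade $F$ to continuity across $0$ via a dyadic rescaling argument built on Proposition \ref{epsilon reg. proposition}, and finally invoke the bootstrap from Corollary \ref{W^ k,2 bounds given epsilon reg.} on a small disk containing $0$ to obtain $C^\infty$ regularity of both $F$ and $\alpha$.

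Since $F$ is conformal, \eqref{g in terms of DF} together with finite area yields $\int_{\mathbb{D}} |\nabla F|^2\,dxdy < \infty$, so $F \in W^{1,2}(\mathbb{D},\mathbb R^N)$ because $\{0\}$ has zero $W^{1,2}$-capacity in dimension two. Likewise \eqref{W definition as alpha} and finite Willmore energy give $\alpha \in L^2(\mathbb{D})$. For continuity of $F$, I would choose $r_0 > 0$ small enough that both $\mathcal{W}(F, \mathbb{D}(r_0)\setminus\{0\})$ and $\int_{\mathbb{D}(r_0)\setminus\{0\}} |\nabla F|^2 \,dxdy$ lie below $\varepsilon_0^2$, partition $\mathbb{D}(r_0)\setminus\{0\}$ into the dyadic annuli $A_k = \{2^{-k-1}r_0 < |z| < 2^{-k}r_0\}$, and rescale via $z \mapsto 2^k z / r_0$ to obtain HSL immersions $F_k$ on a fixed annular region. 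By conformal invariance in dimension two, $\mathcal{W}(F_k) = \mathcal{W}(F, A_k)$ and $\int |\nabla F_k|^2 = \int_{A_k}|\nabla F|^2$, both of which tend to $0$ by integrability. Covering the fixed annulus by finitely many disks and applying Proposition \ref{epsilon reg. proposition} gives $\|\nabla F_k\|_{W^{1,p}} \le C\|\nabla F_k\|_{L^2} \to 0$; a variant of the bootstrap from Corollary \ref{W^ k,2 bounds given epsilon reg.} applied to $F_k - c_k$ with $c_k$ the mean of $F_k$ then produces $\|F_k - c_k\|_{C^0} \to 0$, hence $\mathrm{osc}(F|_{A_k}) \to 0$. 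A Cauchy argument across overlapping annuli provides a continuous extension of $F$ to $0$.

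With $F \in C^0(\mathbb{D}) \cap C^\infty(\mathbb{D}\setminus\{0\})$ and $\alpha \in L^2(\mathbb{D})$, the system \eqref{Delta F bootstraps corollary}--\eqref{d, d* alpha bootstraps corollary} holds classically off $\{0\}$ with $L^1_{loc}$ right-hand sides on $\mathbb{D}$, so it extends distributionally to all of $\mathbb{D}$ because $\{0\}$ has zero $W^{1,2}$-capacity (test functions $\phi \in C^\infty_c(\mathbb{D})$ can be approximated by cutoffs supported in $\mathbb{D}\setminus\{0\}$). The bootstrap in the proof of Corollary \ref{W^ k,2 bounds given epsilon reg.} is local and uses only the elliptic system together with the small Willmore hypothesis, so it applies to $(F,\alpha)$ on a disk around $0$ where $\mathcal{W}(F) < \varepsilon_0^2$, upgrading the pair to $C^\infty(\mathbb{D})$.

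The main obstacle is the continuity step. The bound supplied by Corollary \ref{W^ k,2 bounds given epsilon reg.} has the schematic form $\le C(\|\nabla F\|_2^2 + \|\alpha\|_2^2 + 1)^2$, and the non-vanishing ``$+1$'' (which originates from the $\|F\|_6$ term when estimating $\|\Delta F\|_6$) is harmless when the target is already compact but must be eliminated if one wants oscillation to vanish as the energies shrink. Subtracting the mean value $c_k$ and re-tracking the Sobolev--Schauder iteration so that every appearance of $F$ is replaced by $F - c_k$ (invoking Poincaré to control $\|F - c_k\|_{L^6}$ by $\|\nabla F\|_2$) is what makes the dyadic rescaling truly produce a Cauchy sequence and closes the argument.
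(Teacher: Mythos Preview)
Your approach diverges from the paper's in one essential place. The paper does not establish continuity of $F$ by hand; it invokes \cite[Proposition~2.4]{CLi}, a removable-singularity result valid for \emph{any} branched conformal immersion from a punctured disk with finite area and finite Willmore energy, which already yields $F\in W^{2,p}(\mathbb D)$ for every $1<p<4/3$, hence $\nabla F\in L^{s}_{loc}$ for all $s<4$. From that starting regularity the elliptic system \eqref{Delta F bootstraps corollary}--\eqref{d, d* alpha bootstraps corollary} (shown to hold distributionally across $0$ by the cutoff argument of \cite[p.~41]{SW}) is bootstrapped in two short steps to $C^\infty$. Your dyadic/oscillation argument for continuity is correct (and the Poincar\'e trick to remove the ``$+1$'' is well spotted), but it is redundant once one has \cite[Proposition~2.4]{CLi}.

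The real gap is in your last paragraph. You assert that, once $F\in C^0(\mathbb D)$ and the system holds distributionally, ``the bootstrap in the proof of Corollary~\ref{W^ k,2 bounds given epsilon reg.}'' applies on a small disk about $0$. But that bootstrap \emph{begins} by invoking Proposition~\ref{epsilon reg. proposition}, and as stated in this paper (and in Definition~2.1) Proposition~\ref{epsilon reg. proposition} is formulated for a branched conformal immersion defined and smooth on the \emph{whole} disk $\mathbb D(R)$---precisely the regularity at $0$ you have not yet secured. If you try to start the iteration directly from the data you actually possess, namely $\nabla F\in L^2$ and $\alpha\in L^2$, both right-hand sides in \eqref{Delta F bootstraps corollary}--\eqref{d, d* alpha bootstraps corollary} are only in $L^{1}$, and neither the $L^p$ theory for $\Delta$ nor \cite[Theorem~7.9.7]{Hor} for the $(d,d^*)$ system gives anything useful from $L^1$ data. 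Your dyadic $W^{1,p}$ bounds on the rescaled maps do not sum to a global $W^{2,p}$ or $\nabla F\in L^{s}$, $s>2$, bound near $0$: after undoing the scaling the annular contributions pick up a factor $2^{k(s-2)}$ which need not be compensated by the decay of $\|\nabla F\|_{L^2(A_k)}$. In short, continuity alone does not furnish the supercritical integrability of $\nabla F$ needed to ignite the bootstrap; this is exactly the job done by \cite[Proposition~2.4]{CLi} in the paper's proof.
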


\begin{proof}
By assumption, $F$ has only finitely many branch points. Shrinking and translating $\mathbb D$ if necessary, we can assume that $F$ has no branch points in $\mathbb D \setminus \{0\}$ (or $0$ is the only possible branched points). Let $(x, y)$ be the local coordinates of $\mathbb D$. Note that $\alpha$ is smooth on the punctured disk $\mathbb D \setminus\{0\}$ and satisfies 
\begin{equation} \label{elliptic equation for alpha}
\begin{cases}
d\alpha = F^* \mbox{Rc} \\
\operatorname{div} \alpha = 0.
\end{cases}
\end{equation}
on $\mathbb D\setminus \{0\}$ as $d^* = \frac{1}{\lambda} \operatorname{div}$. Using a cutoff function argument as in \cite[p.41]{SW} we see (\ref{elliptic equation for alpha}) is satisfied in the sense of distribution on the whole disk $\mathbb D$.  

Next we use the bootstrapping argument as in the proof of Corollary \ref{W^ k,2 bounds given epsilon reg.} to show that $F, \alpha$ are smoothly defined at $0$. First of all, since $F$ has finite area and Willmore energy, by Proposition 2.4 in \cite{CLi}, $F$ can be extended to $\mathbb D$ such that $F \in W^{2, p} (\mathbb D)$ for $ 1< p < 4/3$. By (\ref{Sobolev ineq p<2}), we have 
\begin{equation} \label{bootstrps step: DF in L^s for 1<s<4}
\nabla F \in L^s_{loc}(\mathbb D), \ \ \ \text{ for all }1<s<4.
\end{equation}
We then proceed in two steps: 

\vspace{.1cm}

\noindent{\bf  Step 1}: $\alpha, F \in W^{1,q}_{loc}(\mathbb D)$ for all $q>1$: By (\ref{bootstrps step: DF in L^s for 1<s<4}), $F^*\mbox{Rc}\in L^s_{loc}(\mathbb D)$ for all $1<s<2$. Then by \cite[Theorem 7.9.7]{Hor} applied to (\ref{elliptic equation for alpha}), $\alpha \in W^{1,s}_{loc}(\mathbb D)$ for all $1<s<2$. Together with (\ref{Sobolev ineq p<2}) we have $\alpha \in L^q_{loc}(\mathbb D)$ for all $q>1$. Using (\ref{bootstrps step: DF in L^s for 1<s<4}), the RHS of (\ref{Delta F bootstraps corollary}) is in $L^s_{loc}(\mathbb D)$ for all $1<s<2$. Hence $F \in W^{2,s}_{loc}(\mathbb D)$ for all $1<s<2$ by \cite[Lemma 9.16]{GT}. With (\ref{Sobolev ineq p<2}) this implies $F \in W^{1,q}_{loc}(\mathbb D)$ for all $q>1$, so the RHS of (\ref{elliptic equation for alpha}) is in $L^q_{loc}(\mathbb D)$ for all $q>1$. By \cite[Theorem 7.9.7]{Hor}, 
\begin{equation} \label{bootstrps step: alpha in W^{1,q} for q>1}
\alpha \in W^{1,q}_{loc}(\mathbb D), \ \ \ \text{ for all }q>1.
\end{equation}

\noindent {\bf Step 2}: $\alpha, F$ are smooth at $0$: By Step 1, the RHS of (\ref{Delta F bootstraps corollary}) is in $L^q_{loc}(\mathbb D)$ for all $q>1$. Again the $L^p$-theory \cite[Lemma 9.16]{GT} implies that $F \in W^{2,q}_{loc}(\mathbb D)$ for all $q>1$. Together with (\ref{bootstrps step: alpha in W^{1,q} for q>1}), the RHS of (\ref{Delta F bootstraps corollary}) is in $W^{1,q}_{loc}(\mathbb D)$ for all $q>1$. Thus $F\in W^{3,q}_{loc}(\mathbb D)$ for all $q>1$ by \cite[Theorem 9.19]{GT}. Using (\ref{elliptic equation for alpha}), this implies $\alpha \in W^{3,q}_{loc}(\mathbb D)$ for all $q>1$. Now one can argue similarly to see that 
\begin{equation}
F , \alpha \in W^{k,q}_{loc}(\mathbb D), \ \ \ \text{ for all }k\in \mathbb N, q > 1.
\end{equation}
Thus $F, \alpha$ can be both smoothly extended across $0\in \mathbb D$. 
\end{proof}

An immediate consequence is: 

\begin{cor} \label{no HSL S^2}
Let $F : \mathbb S^2 \to M$ be a smooth branched conformal HSL sphere to a K{\"a}hler-Einstein manifold with finite Willmore energy. Then $F$ is minimal. When $M= \mathbb C^2$, there does not exists any branched conformal HSL sphere. 
\end{cor}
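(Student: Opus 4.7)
The plan is to use Hodge theory on $\mathbb S^2$: since $H^1(\mathbb S^2;\mathbb R)=0$, every harmonic $1$-form on $\mathbb S^2$ vanishes, and after removing singularities at the branch locus $\mathscr B$, the mean curvature $1$-form $\alpha$ will turn out to be harmonic when $M$ is Kähler-Einstein. First I would apply Proposition \ref{H extends across branched points} in a small disk around each point of $\mathscr B$ to extend $\alpha$ to a smooth $1$-form on all of $\mathbb S^2$. On $\mathbb S^2 \setminus \mathscr B$, the system \eqref{elliptic system for alpha_H} reads $d\alpha = F^*\operatorname{Rc}$ and $d^*_g \alpha = 0$. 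Since $(M,\bar g)$ is Kähler-Einstein, $\operatorname{Rc}=c\omega$, and $F^*\omega = 0$ by the Lagrangian hypothesis; hence $d\alpha \equiv 0$ on $\mathbb S^2\setminus\mathscr B$, and by smoothness of $\alpha$ this persists on all of $\mathbb S^2$. For the codifferential I would use the conformal invariance of the Hodge $*$ on $1$-forms in dimension two: choosing the round metric $h_0$ on $\mathbb S^2$ and writing $g=\lambda h_0$ with $\lambda \geq 0$ vanishing only on $\mathscr B$, one has $d^*_g \alpha = \lambda^{-1} d^*_{h_0}\alpha$ off $\mathscr B$, so the HSL equation forces $d^*_{h_0}\alpha = 0$ on the dense open set $\mathbb S^2\setminus\mathscr B$; by continuity $d^*_{h_0}\alpha \equiv 0$ on $\mathbb S^2$. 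Thus $\alpha$ is a harmonic $1$-form on $(\mathbb S^2,h_0)$ and must vanish identically. Combined with $\alpha(Y) = \bar g(J\vec H, F_* Y)$ and the fact that $J\vec H$ is tangent along the Lagrangian immersion, this forces $\vec H \equiv 0$ off $\mathscr B$, so $F$ is branched minimal.

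For the second statement, $M = \mathbb C^2$ with the flat metric is Ricci-flat, hence Kähler-Einstein, and the first part yields that any branched conformal HSL sphere in $\mathbb C^2$ is branched minimal. Using the trivial isometric embedding $\mathbb C^2 = \mathbb R^4 \hookrightarrow \mathbb R^4$, the ambient second fundamental form $A^M$ is identically zero and equation \eqref{H = DDF} reduces to $\Delta_g F = 0$ on $\mathbb S^2\setminus\mathscr B$. By the conformal relation \eqref{Lap conformal change} we then get $\Delta_{h_0} F = 0$ off $\mathscr B$, and the smoothness of $F$ on $\mathbb S^2$ upgrades this to $\Delta_{h_0} F \equiv 0$ globally. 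Compactness of $\mathbb S^2$ now forces each Euclidean coordinate of $F$ to be constant by the maximum principle, contradicting the condition that $\lambda$ vanishes only on the finite set $\mathscr B$. Hence no branched conformal HSL sphere in $\mathbb C^2$ can exist.

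The main technical input is Proposition \ref{H extends across branched points}, which removes the singularities of $\alpha$ at the branch points; beyond that, the essential conceptual step is the two-dimensional conformal invariance of the Hodge $*$ on $1$-forms, which converts the degenerate-metric equation $d^*_g\alpha = 0$ into an equation with respect to a smooth background metric and lets the standard vanishing $H^1(\mathbb S^2)=0$ apply. I do not expect any further serious obstacle; the contradiction in the $\mathbb C^2$ case is then a routine application of the maximum principle to a harmonic map from a compact surface.
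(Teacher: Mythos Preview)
Your proof is correct and follows essentially the same approach as the paper: extend $\alpha$ across $\mathscr B$ via Proposition \ref{H extends across branched points}, use the K{\"a}hler--Einstein condition and the Lagrangian property to see $\alpha$ is harmonic, and invoke $H^1(\mathbb S^2)=0$. You supply more detail than the paper does---in particular the conformal-invariance step that transfers $d^*_g\alpha=0$ to the round metric, and the maximum-principle argument ruling out branched minimal spheres in $\mathbb C^2$---but the underlying strategy is the same.
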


\begin{proof}
Let $F: \mathbb S^2 \to \mathbb M$ be such an immersion. By Proposition \ref{H extends across branched points}, the mean curvature 1-form $\alpha$ extends smoothly to a smooth 1-form on $\mathbb S^2$, and it is harmonic 
since $M$ is K{\"a}hler-Einstein. By the Hodge theorem, since $\mathbb S^2$ is simply connected $\alpha$ is zero and thus $F : \mathbb S^2 \to M$ is minimal. The last statement is true since in $ \mathbb C^2$ there is no closed branched conformal minimal immersion. 
\end{proof}

\section{Bubble tree convergence: Proof of Theorem \ref{main thm}} \label{proof of main thm}
In this section we prove Theorem \ref{main thm}. 

\begin{proof} [Proof of Theorem \ref{main thm}] Let $\{F_n\}$ be a sequence as described in Theorem \ref{main thm}. We isometrically embed $(M, \omega, J, g)$ into $\mathbb R^N$. Thus when treated as immersions to $\mathbb R^N$ the areas of $F_n$ and the Willmore energies (in $\mathbb R^N$) are uniformly bounded as $F_n$ all lie in a fixed compact set $K$ in $M$. 
Thus Theorem 1 in \cite{CLi} is applicable. In particular, there is a stratified surface $\Sigma_\infty$ and a branched conformal immersion $F_ \infty:\Sigma_\infty \to \mathbb R^N$ such that a subsequence of $\{F_n(\Sigma)\}$ converges in Hausdorff measure to $F_\infty (\Sigma_\infty)$; consequentially, the image of $F_\infty$ is in $M$. 

Now we show that $\{ F_n\}$ converges to $F_\infty$ in the sense of bubble tree as in Definition \ref{definition of bubble tree convergence} and $F_\infty$ is a branched conformal HSL immersion on each component. 
Following \cite{CLi} and supplementing further detailed construction of various domains which will be used in showing convergence of HSL immersions, we now divide the construction of the bubble tree and convergence into six steps. 

\vspace{.2cm} 

\noindent {\bf Step 1} - {\it Principal component $\overline{\Sigma}_0$}. First we discuss the convergence on the principal components. We consider only the case of high genus (${\mbox g}_\Sigma \ge 2$) with the possible degeneration of conformal structures. The case for ${\mbox g}_\Sigma= 0, 1$ are easier and details can be found in \cite{CLi}. Let $h_n$ be the Riemannian metric on $\Sigma$ conformal to $F_n^* \bar g$ and with constant Gauss curvature $-1$. We closely follow the Hyperbolic case in \cite[Section 2.5]{CLi}. 

By Proposition 5.1 in \cite{H}, there exists a nodal surface $\Sigma_0$ with nodal points $\mathscr N = \{a_1,\cdots, a_m\}$ and a maximal collection $\Gamma_n = \{ \gamma_n^1, \cdots, \gamma_n^m\}$ of pairwise disjoint, simple closed geodesics in $(\Sigma, h_n)$. The geodesics $\gamma_n^j$ satisfy $\ell_n^j:= \mbox{Length}\,(\gamma_n^j) \to 0$ as $n\to \infty$. Moreover, by passing to a subsequence the followings hold:
\begin{enumerate}
\item There are continuous maps $\varphi^{\tiny\mbox p}_n : \Sigma \to \Sigma_0$ for $n\in\mathbb N$ such that $\varphi^{\tiny\mbox p}_n : \Sigma \setminus \Gamma_n \to \Sigma_0\setminus \mathscr N$ are diffeomorphic and $\phi^{\tiny\mbox p}_n(\gamma_n^j) = a_j$ for $j = 1,\cdots ,m$. 
\item For the inverse diffeomorphisms $\psi^{\tiny\mbox p}_n : \Sigma_0 \setminus \mathscr N\to \Sigma\setminus \Gamma_n$ of $\varphi^{\tiny\mbox p}_n$, we have $(\psi^{\tiny\mbox p}_n)^* h_n \to h_0$ locally smoothly in $\Sigma_0\setminus \mathscr N$. 
\end{enumerate}
Here $h_0$ is a hyperbolic structure on $\Sigma_0$: that is, a smooth complete metric on $\Sigma \setminus\mathscr N$ with finite volume and Gauss curvature $-1$.

Consider the sequence of mappings 
\begin{equation} \label{dfn of tilde F_n}
\widetilde F_n:= F_n \circ \psi^{\tiny\mbox p}_n :\Sigma_0 \setminus \mathscr N \to M.
\end{equation}
Let $z\in \Sigma_0\setminus \mathscr N$ be fixed. By Lemma 1.2 in \cite{dTK}, since $\{ (\psi^{\tiny\mbox p}_n)^*h_n\}$ converges locally smoothly to $h_0$ in $\Sigma _0 \setminus \mathscr N$, there exist neighborhoods $D^n_z, D^0_z$ in $\Sigma_0\setminus \mathscr N$ of $z$ and conformal diffeomorphisms 
$$
\theta_n : \mathbb D \to D^n_z 
$$ 
for $(\psi^{0}_n)^* h_n$ on $D^n_z$ such that $\theta_n(0) = z$ and $\{\theta_n\}$ converges smoothly to a conformal diffeomorphism 
$$
\theta_\infty : \mathbb D\to D^0_z.
$$ 
We may further assume that the geodesic disk $D^0_z(r_0)$ in $\Sigma_0\setminus\mathscr N$ in the metric $h_0$ for some $r_0>0$ in contained in all $D^n_z$ for large $n$. Define
\begin{equation} \label{dfn of hat F_n}
\widehat F_n := \widetilde F_n \circ \theta_n:\mathbb D \to M. 
\end{equation}
To summarize,

$$
\widehat{F}_n:\mathbb D\xrightarrow{\theta_n} \xymatrix{\Sigma_0\setminus\mathscr{N}\ar@/^2pc/[rr]^{\widetilde{F}_n}\  \ar[r]^{\psi_n^{\tiny\mbox p}} &\Sigma\setminus\Gamma_n \ar[r]^{F_n} & M }.
$$

Let $\mathcal C (\{ \widetilde F_n\})$ be the blowup set of the sequence $\{\widetilde F_n\}$ in $\Sigma_0\setminus \mathscr N$ defined as 
\begin{equation} \label{blowup set of F_n}
\mathcal C (\{ \widetilde F_n\}) := \left\{ y\in \Sigma_0\setminus \mathscr N : \lim_{r\to 0} \liminf_{n\to \infty} \ W_{\mathbb R^N} (\widetilde F_n, D^0_y(r)) > \varepsilon _2^2 \right\},
\end{equation}
where $D_y^0(r)$ is the disk centered at $y$ of radius $r$ in the metric $h_0$ and $\varepsilon_2<\varepsilon _0$ is given as in the Decay estimate \cite[Proposition 2.3]{CLi} which is used for the bubble tree construction in \cite{CLi}. 

\begin{center}
\tikzset{every picture/.style={line width=0.55pt}} 

\begin{tikzpicture}[x=0.55pt,y=0.55pt,yscale=-1,xscale=1]

\draw    (38.3,112.2) .. controls (48.3,162.2) and (123.3,176.2) .. (171.3,142.2) ;

\draw    (38.3,112.2) .. controls (44.3,55.2) and (113.3,52.2) .. (173.3,88.2) ;

\draw    (173.3,88.2) .. controls (189.3,100.2) and (246.3,108.2) .. (270.3,90.2) ;

\draw    (171.3,142.2) .. controls (182.14,134.07) and (195.54,130.79) .. (209.31,130.36) .. controls (246.36,129.21) and (286.03,148.74) .. (285.3,150.2) ;

\draw    (270.3,90.2) .. controls (317.3,54.2) and (404.3,51.2) .. (416.3,96.2) ;

\draw    (285.3,150.2) .. controls (326.3,179.2) and (422.3,168.2) .. (416.3,96.2) ;

\draw    (297.3,117.2) .. controls (319.3,156.2) and (367.3,149.2) .. (369.3,105.2) ;

\draw    (304.3,126.2) .. controls (310.3,104.2) and (345.3,93.2) .. (367.3,118.2) ;

\draw    (80,104) .. controls (89.3,127.2) and (117.3,127.2) .. (136.3,108.2) ;

\draw    (87,114) .. controls (97.3,98.2) and (120.3,101.2) .. (126.3,113.2) ;

\draw    (229.3,102.2) .. controls (237.3,103.2) and (237.3,130.2) .. (229.3,131.2) ;

\draw  [dash pattern={on 4.5pt off 4.5pt}]  (229.3,131.2) .. controls (221.3,132.2) and (221.3,100.2) .. (229.3,102.2) ;

\draw    (337.3,142.2) .. controls (344.3,142.2) and (342.3,165.2) .. (338.3,165.2) ;

\draw  [dash pattern={on 4.5pt off 4.5pt}]  (337.3,142.2) .. controls (331.3,144.2) and (332.3,164.2) .. (338.3,165.2) ;

\draw    (200.53,287.57) .. controls (211.53,225.57) and (352.3,224.2) .. (364.3,269.2) ;

\draw    (281.87,310) .. controls (288.2,354.57) and (383.3,326.6) .. (364.3,269.2) ;

\draw    (281.87,308.45) .. controls (284.44,289.38) and (320.44,306.58) .. (318.04,281.38) ;

\draw    (253.3,299.6) .. controls (254.3,275.6) and (293.3,266.2) .. (315.3,291.2) ;

\draw    (103,284) .. controls (112.3,307.2) and (140.3,307.2) .. (159.3,288.2) ;

\draw    (110,294) .. controls (120.3,278.2) and (143.3,281.2) .. (149.3,293.2) ;

\draw    (281.87,310) .. controls (281.87,346.9) and (203.87,352.57) .. (199.87,293.57) ;

\draw    (238.3,288.6) .. controls (246.3,310.6) and (278.84,292.18) .. (281.87,308.45) ;

\draw    (67.3,291.6) .. controls (73.3,232.6) and (196.53,238.23) .. (200.53,287.57) ;

\draw    (67.3,291.6) .. controls (73.2,355.57) and (198.87,347.57) .. (199.87,290.57) ;



\draw (281.87,311.68) node [scale=2.5]  {$\cdot $};

\draw (200,293) node [scale=2.5]  {$\cdot $};

\draw (39,60) node [scale=1.1]  {$\Sigma $};

\draw (230,149) node  [scale=0.9] {$\gamma ^{a_{1}}_{n}$};

\draw (344,177) node  [scale=0.9] {$\gamma ^{a_{2}}_{n}$};

\draw (237,198.6) node [scale=0.9]  {$\varphi ^{\tiny\mbox p}_{n}$};

\draw (219,202.6) node [scale=2.488]  {$\downarrow $};

\draw (250,255.6) node [scale=2.5]  {$\cdot $};

\draw (261,253.8) node [xslant=0.17]  [scale=0.9]{$z$};

\draw (65,235.2) node [scale=1.1]  {$\Sigma _{0}$};

\draw (188,289.2) node  [scale=0.9] {$a_{1}$};

\draw (296,312.2) node  [scale=0.9] {$a_{2}$};

\draw (293,93.6) node [scale=1.7280000000000002]  {$\cdot $};

\draw (306,94.8) node [xslant=0.17] [scale=0.9] {$z_{n}$};

\end{tikzpicture}
\end{center}

\begin{center}
{\small Figure 1. The principal component}\footnote{In the above illustration, $\Sigma$ is a genus two surface, the nodal surface $\Sigma_0$ has two nodal points $\mathscr N =\{ a_1, a_2\}$, the mapping $\varphi^{\tiny\mbox p}_n : \Sigma \to \Sigma_0$ maps the two geodesics $\gamma^{a_1}_n, \gamma^{a_2}_n$ on $(\Sigma, h_n)$ to $a_1, a_2$ respectively, and $\mathcal C(\{ \tilde F_n\}) = \{z\}$).}
\end{center}

\vspace{.1cm}

The principal component of the bubble tree is constructed away from the blowup set as follows. Assume $z\notin \mathcal C (\{ \widetilde F_n\})$. Then there is $r>0, \ell\in \mathbb N$ so that, 
$$D^0_z(r) \subset \Sigma_0\setminus \mathscr N, \ \  W_{\mathbb R^N} (\widetilde F_n , D^0_z(r)) <\varepsilon _0^2, \ \ \ \forall n\ge \ell.$$ 
Thus $\{ \widehat F_n\}$ is a sequence of branched conformal HSL immersion $\mathbb D \to M$ with
\begin{equation*}
W_{\mathbb R^N} (\widehat F_n , \mathbb D) <\varepsilon _0^2, \ \ \forall n \ge \ell
\end{equation*}
due to the conformal invariance of the Willmore energy. 
By Corollary \ref{W^ k,2 bounds given epsilon reg.}, for each $k\in \mathbb N$, there is $C_k>0$ so that 
\begin{equation}\label{bounds of holder norms of hat F_n}
\|\widehat F_n\|_{C^{k,\beta }(\mathbb D(1/2))} \le C_k.
\end{equation}
Hence a subsequence of $\{ \widehat F_n\}$ converges smoothly in $\mathbb D(1/2)$ to some $\widehat F_\infty : \mathbb D(1/2) \to M$ which satisfies
\begin{equation} \label{F_infty is conformal}
  \widehat F_\infty^* \bar g = \frac{1}{2}|\nabla \widehat F_\infty|^2_{\bar g} \,\delta \ \ \mbox{and} \ \  \widehat F_\infty^* \omega = 0.
  \end{equation}
Hence $\widehat F_\infty$ is a branched conformal Lagrangian immersion if it is non-constant. 

Let $\hat \alpha_n$ be the mean curvature 1-form of $\widehat F_n$. Using (\ref{bounds of holder norms of hat F_n}) and (\ref{d, d* alpha bootstraps corollary}), we have 
$$\| \hat \alpha_n\|_{W^{k,2}(\mathbb D(1/2)} \le C_k, \ \ \ \text{ for all } n\ge \ell, \ k\in \mathbb N.$$
Thus a subsequence of $\{\hat\alpha_n\}$ converges smoothly to a 1-form $\hat\alpha_\infty$ on $\mathbb D(1/2)$. Note that $(\widehat F_n, \hat \alpha_n)$ satisfies (\ref{H = DDF in for epsilon reg.}) and (\ref{elliptic system for alpha_H}) for all $n$. Taking $n\to\infty$, we have
\begin{equation} \label{alpha_infty 1st equation}
\Delta \widehat F_\infty = (\hat\alpha_\infty)_x J( \widehat F_\infty)\frac{\partial \widehat F_\infty}{\partial x}+ (\hat\alpha_\infty)_y J ( \widehat F_\infty) \frac{\partial \widehat F_\infty}{\partial y} + A^M \left(\frac{\partial \widehat F_\infty}{\partial x}, \frac{\partial \widehat F_\infty}{\partial x}\right) + A^M\left( \frac{\partial \widehat F_\infty}{\partial y}, \frac{\partial \widehat F_\infty}{\partial y}\right)
\end{equation}
and 
\begin{equation} \label{alpha_infty 2nd equation}
\begin{cases} 
d\hat\alpha_\infty = \widehat F_\infty^* \mbox{Rc}\\
d^*\hat\alpha_\infty = 0. 
\end{cases}
\end{equation}
From (\ref{alpha_infty 1st equation}) and (\ref{F_infty is conformal}), if $\widehat F_\infty$ is non-constant, we see that $\hat\alpha_\infty = \widehat{F}_\infty^*\iota_{\vec H_\infty} \omega$, where $\vec H_\infty$ is the mean curvature vector of $\widehat F_\infty$. Then (\ref{alpha_infty 2nd equation}) implies that $\widehat F_\infty$ is a branched conformal HSL immersion. 

Define $F_\infty: \theta_\infty (\mathbb D (1/2)) \to M$ by $F_\infty = \widehat F_\infty \circ \theta_\infty^{-1}$. Then the convergence $\theta_n\to \theta_\infty$ implies that $\{ \widetilde F_n\}$ converges smoothly to $F_\infty$ in $\theta_\infty (\mathbb D(1/2))$. Since $z\in \Sigma_0 \setminus (\mathscr N \cup \mathcal C(\{\widetilde F_n\}))$ is arbitrary, there is a smooth mapping $F_\infty : \Sigma_0 \setminus (\mathscr N \cup \mathcal C(\{\widetilde F_n\})) \to M$ so that the sequence $\{ \widetilde F_n\}$ converges locally smoothly to $F_\infty$. 

From the construction of the nodal surface $\Sigma_0$, for each $a\in \mathscr N$ and $\delta$ small, $B_a(\delta)\setminus \{a\} \subset \Sigma_0 \setminus \mathscr N$ is a union of two punctured disks. For each punctured disk $\mathbb D^*_+, \mathbb D^*_-$, we add the points $a^+, a^-$ respectively. Define 
\begin{equation}
\overline{\Sigma}_0 =( \Sigma_0 \setminus \mathscr N) \cup \{ a^+, a^-: a\in \mathscr N\}. 
\end{equation}
As the set $\Sigma_0 \setminus \mathscr N$ decomposes into finitely many connected components $\{ \Sigma^i_0\}_{i\in I}$ for some finite index set $I$, 
\begin{equation} \label{component of Sigma_0}
\overline {\Sigma}_0 = \bigcup_{i\in I} \overline {\Sigma^i_0}
\end{equation}
where each $\overline{\Sigma^i_0}$ is a connected closed Riemann surface, and $\overline{\Sigma^{i_1}_0} \cap \overline{\Sigma^{i_2}_0}$ is finite whenever $i_1\neq i_2$. 

For each $i\in I$, $F_\infty$ is defined in 
$$
\overline{\Sigma^i_0} \setminus \left(\{ a^+, a^-:a\in\mathscr N\} \cup \mathcal C(\{F_n\}) \right). 
$$
If $F_\infty$ is constant in this set, then clearly $F_\infty$ extends to a constant map on $\overline{\Sigma^i_0}$. If not, then $F_\infty$ restricts to a branched conformal HSL immersion. By Proposition \ref{H extends across branched points}, $F_\infty$ can be smoothly extended to a branched conformal HSL immersion $\overline{\Sigma^i_0}$. Since $i\in I$ is arbitrary, $F_\infty$ can be extended continuously to entire $\overline{\Sigma}_0$ and it is smooth on each $\overline{\Sigma^i_0}$; for simplicity, we still denote $F_\infty : \overline{\Sigma}_0 \to M$ for the extended mapping.

\vspace{.2cm}

\noindent{\bf Step 2} - {\it The first level of bubbles at $\mathcal C(\{\widetilde F_n\})$}. Let $z \in \mathcal C(\{ \widetilde F_n\}) \subset \Sigma_0 \setminus \mathscr N$. We now construct the first level of bubble tree at $z$. Let $\theta_n , \widetilde F_n$ and $\widehat F_n$ be defined as in Step 1. For each $n$, let $z_n \in \mathbb D, r_n > 0$ with $z_n \to 0, r_n \to 0$ be chosen as in \cite[Section 2.3, Step 1]{CLi}:
\begin{equation} \label{outermost bubbles takes away W}
W (\widehat F_n, \mathbb D_{z_n} (r_n)) = \frac{\varepsilon _2^2}{2}.
\end{equation}
Define $\phi_{z,n} : \mathbb S^1 \times [0,T_n]\to M$ with $T_n = -\ln r_n$ by
\begin{equation} \label{def of phi_{z,n}}
 \phi_{z,n} (\theta, t) = z_n + (e^{-t}, \theta)
\end{equation}
Recall \cite[Lemma 2.7]{CLi}, there are numbers $l=l(z)$ and $d_n^0, \cdots, d_n^l$ so that
\begin{equation} \label{conditions on d_n^}
\begin{split}
0 = d_n^0 <d_n^1 < \cdots < d_n^l = T_n, \\
\lim_{n\to +\infty} \left(d_n ^j - d_n^{j-1}\right) = +\infty, \\
W_{\mathbb R^N} ( \widehat F_n \circ \phi_{z,n}, \mathbb S^1 \times [d_n^j, d_n^{j}+1] ) &\ge \varepsilon _2^2,\ \ j\not = 0,l
\end{split}
\end{equation}
and
\begin{equation} \label{smallness of W in neck region}
\lim_{T\to+\infty}\liminf_{k\to+\infty}\sup_{t\in [d_k^{j-1}+T, d_k^j-T]}W_{\mathbb R^N}(\widehat F_n \circ \phi_{z,n},\mathbb S^1\times[t,t+1])\leq \varepsilon^2_2, \ \  j=1,\dots, l.
\end{equation}
Choose $c_n^1, \cdots, c_n^{l}, e_n^0, \cdots, e_n^{l-1}$ so that 
\begin{equation} \label{def of e_n, c_n}
\begin{split}
d_n^i < e_n^i &< c_n^{i+1} <  d_n^{i+1}\\
\lim_{n\to +\infty} \left(d_n^i - c_n^i\right) &= \lim_{n\to +\infty} \left(e_n^i - d_n^i\right) = +\infty.
\end{split}
\end{equation}
Next we show that there is no loss of area in the region $(\widehat F_n \circ \phi_{z,n}) ( \mathbb S^1 \times [e^i_n, c^{i+1}_n])$ when $i = 0, \cdots , l -1$: 
\begin{equation} \label{no area loss in (e^i_n, c^{i+1}_n)}
\lim_{n\to +\infty} \int_0^{2\pi} \int_{e^i_n}^{c^{i+1}_n} |\nabla (\widehat F_n \circ \phi_{z, n})|^2 dt\, d\theta = 0.
\end{equation}
By (\ref{smallness of W in neck region}), we can apply \cite[Proposition 2.6 (2)]{CLi}, that is
\begin{equation}
\lim_{T\to +\infty} \lim_{n\to +\infty} \int_0^{2\pi} \int_{d^i_n+T}^{d^{i+1}_n-T} |\nabla (\widehat F_n \circ \phi_{z, n})|^2 dt\, d\theta =0.
\end{equation}
Then for any $\epsilon>0$, there is $T$ so that 
\begin{equation}
\lim_{n\to +\infty} \int_0^{2\pi} \int_{d^i_n+T}^{d^{i+1}_n-T} |\nabla (\widehat F_n \circ \phi_{z, n})|^2 dt\, d\theta <\epsilon/2
\end{equation}
and hence
\begin{equation}
\int_0^{2\pi} \int_{d^i_n+T}^{d^{i+1}_n-T} |\nabla (\widehat F_n \circ \phi_{z, n})|^2 dt\, d\theta <\epsilon
\end{equation}
for $n$ large enough. By (\ref{def of e_n, c_n}), we have $d^i_n + T < e^i_n < c^{i+1}_n < d^{i+1}_n - T$ for $n$ large, hence
\begin{equation}
\int_0^{2\pi} \int_{e^i_n}^{c^{i+1}_n} |\nabla (\widehat F_n \circ \phi_{z, n})|^2 dt\, d\theta <\epsilon
\end{equation}
for $n$ large enough and this proves (\ref{no area loss in (e^i_n, c^{i+1}_n)}). 

Fix a conformal diffeomorphism $\Phi : \mathbb S^2 \setminus \{ \pm 1\} \to \mathbb S^1 \times \mathbb R$ with $\lim_{z\to \pm 1} \Phi (z) = \pm \infty$, and let $T_{r_0}: \mathbb S^1 \times \mathbb R\to \mathbb S^1 \times \mathbb R$ be the translation $T_{r_0} (\theta, r) = (\theta, r+r_0)$. For each $i=1, \cdots, l-1$ define
\begin{equation} \label{dfn of F^i_z, n}
\begin{split}
F_{z, n}^i &: \Phi^{-1}\big( \mathbb S^1 \times ( c_n^i - d_n^i, e_n^i - d_n^i) \big)\to M,\\
F^i_{z,n} &= \widehat F_n \circ \phi_{z,n}\circ T_{d_n^i} \circ \Phi.
\end{split}
\end{equation}
By (\ref{def of e_n, c_n}), the domain of $F^i_{z,n}$ exhausts $\mathbb S^2\setminus \{ \pm 1\}$ as $n\to +\infty$. Let $\Phi_0 : \mathbb S^2 \setminus \{-1\} \to \mathbb R^2$ be the stereographic projection from $-1$. Define
\begin{equation}
\begin{split}
F^l_{z,n} &: \Phi_0^{-1} \mathbb D (n)\to M,\\
F^l_{z,n} (y)&=\widehat F_n \left(z_n+\frac{1}{n e^{c_n^l}} \Phi_0(y)\right).
\end{split}
\end{equation} 
So for each fixed $i=1, \cdots, l$, $\{ F^i_{z, n}\}$ is a sequence of branched conformal HSL immersions from a sequence of exhausting domains in a fixed Riemann sphere $\mathbb S^2_{z, i}$. Let $\mathcal C(\{F^i_{z, n}\})$ be the blowup set of the sequence $\{F^i_{z, n}\}$: 
\begin{equation} \label{blowup set of F^i_z,n}
\mathcal C (\{ F^i_{z,n}\}) := \left\{ y\in \mathbb S^2_{z, i}\setminus \{ \pm 1\} : \lim_{r\to 0} \liminf_{n\to +\infty} \ W_{\mathbb R^N} (F^i_{z,n}, D_y(r)) > \varepsilon _2^2 \right\},
\end{equation}
Using Corollary \ref{W^ k,2 bounds given epsilon reg.} and Proposition \ref{H extends across branched points}, a subsequence of $\{ F^i_{z, n}\}$ (without changing notation) converges locally smoothly in $\mathbb S^2_{z, i} \setminus( \{ \pm 1\}\cup \mathcal C (\{ F^i_{z,n}\}))$ to a smooth mapping $F^i_{z, \infty} : \mathbb S^2\to M$. Arguing as in Step 1, $F^i_{z, \infty}$ is either constant or a branched conformal HSL immersion. Moreover, by \cite[(2.14)]{CLi}, we have 
\begin{equation}
\begin{split}
 F^i_{z, \infty} (1) &= F^{i+1}_{z, \infty} (-1),\ \ 1\leq i\leq l-1,\\
 F^1_{z, \infty}(-1) &= F_\infty(z).
 \end{split}
\end{equation}
This will be used in Step 4. 

\vspace{.2cm}

\noindent{\bf Step 3} - {\it The first level of bubbles at $\mathscr N$}. Let $a\in \mathscr N$. For each $n\in \mathbb N$, there is a simple closed geodesic $\gamma_n^a$ in $(\Sigma, h_n)$ so that its length $L(\gamma^a_n)\to 0$ and $\gamma_n^a\to a$ as $n\to \infty$. By the Collar Lemma (\cite{Z}, see also \cite[Lemma 2.9]{CLi}), there is a collar neighborhood $\mathscr C^a_n \subset \Sigma$ containing $\gamma_n^a$ and a conformal diffeomorphism 
\begin{equation}
\tilde \phi_{a,n} : \mathbb S^1 \times (-l_n, l_n) \to (\mathscr C^a_n, h_n)
\end{equation}
with $l_n \to \infty$ as $n\to \infty$. By \cite[Lemma 2.7]{CLi}, there is $\tilde l = \tilde l(a) \in \mathbb N$ so that
\begin{equation}
\begin{split}
-l_n = \tilde d_n^0 < \tilde d_n^1 < \cdots < \tilde d_n^{\tilde l} = l_n\\
\lim_{n\to +\infty} \left(\tilde d_n^j - \tilde d_n^{j-1} \right)= \infty.
\end{split}
\end{equation}
Also (2.17), (2.18) in \cite{CLi} are satisfied. Then we can choose $\tilde c_n^j, \tilde e_n^j$ which satisfy similar conditions satisfied by $c_n^i, e_n^i$ in Step 2. 
Note that for any $j=1, \cdots, \tilde l-1$, as in Step 2, define
\begin{equation} \label{dfn of F^i_a, n}
\begin{split}
F^{j}_{a,n} &: \Phi^{-1} \big( \mathbb S^1 \times (\tilde c_n^j - \tilde d_n^j, \tilde e_n^j - \tilde d_n^j) \big)\to M, \\
F^j_{a, n} &= F_n \circ \tilde \phi_{a,n} \circ T_{\tilde d_n^j}\circ \Phi.
\end{split}
\end{equation}
For each fixed $j=1, \cdots, \tilde l-1$, $\{ F^j_{a, n}\}$ is a sequence of branched conformal HSL immersions from a fixed Riemann sphere $\mathbb S^2_{a, j}$. Also, the sequence $\{ F^j_{a, n}\}$ subconverges locally smoothly in $\mathbb S^2_{a, j} \setminus( \{ \pm 1\}\cup \mathcal C (\{ F^j_{a,n}\}))$ to a smooth mapping $F^j_{a, \infty} : \mathbb S^2_{a, j} \to M$, which is either constant or a branched conformal HSL immersion and we have
\begin{equation}
\begin{split}
 F^j_{a, \infty} (1) &= F^{j+1}_{a, \infty} (-1),\ \ 1\leq j\leq \tilde l-2,\\
 F^1_{a, \infty} (-1) &= F_\infty (a^-), \\
 F^{\tilde l-1}_{a, \infty} (1) &= F_\infty (a^+).
\end{split}
\end{equation}

\vspace{.2cm}

\noindent{\bf Step 4} - {\it Attaching the first level of bubbles to $\overline{\Sigma}_0$}. Let $\Sigma_{L_1}$ be the topological space given by
\begin{equation}
\Sigma_{L_1} := \left( \overline{\Sigma_0} \cup\bigcup_{z\in \mathcal C(\{\widetilde F_n\})} \bigcup_{i=1}^{l(z)} \mathbb S^2_{z, i}\cup \bigcup_{a\in \mathscr N} \bigcup_{j=1}^{\tilde l(a)-1} \mathbb S^2_{a, j}\right) /\sim,
\end{equation}
where $\sim$ identifies
\begin{enumerate}
\item for each $z \in \mathcal C(\{\widetilde F_n\})$: $z$ with $-1\in \mathbb S^2_{z, 1}$, and $+1 \in \mathbb S^2_{z, i}$ with $-1\in \mathbb S^2_{z, i+1}$ for $i=1,\cdots, l(z)-1$; 
\item for each $a\in \mathscr N$: $a^-$ with $-1$ in $\mathbb S^2_{a, 1}$, $a^+$ with $+1$ in $\mathbb S^2_{a, \tilde l(a)-1}$, and $+1 \in \mathbb S^2_{a, j}$ with $-1\in \mathbb S^2_{a, j+1}$ for $j=1,\cdots, \tilde l(a)-2$.  
\end{enumerate}

\begin{center}
\tikzset{every picture/.style={line width=0.55pt}} 

\begin{tikzpicture}[x=0.55pt,y=0.55pt,yscale=-1,xscale=1]

\draw    (360.3,236.63) .. controls (374.3,284.63) and (452.3,219.05) .. (424.3,179.63) ;

\draw    (360.3,236.63) .. controls (360.3,213.63) and (392.3,217.63) .. (387.3,195.63) ;

\draw    (288.3,214.63) .. controls (289.3,190.63) and (358.15,186.11) .. (380.15,211.11) ;

\draw    (36,198) .. controls (45.3,221.2) and (73.3,221.2) .. (92.3,202.2) ;

\draw    (43,208) .. controls (53.3,192.2) and (76.3,195.2) .. (82.3,207.2) ;

\draw    (304.3,233.63) .. controls (296.3,263.63) and (239.3,258.63) .. (246.3,198.63) ;

\draw    (275.3,202.6) .. controls (283.3,224.6) and (300.3,209.63) .. (304.3,233.63) ;

\draw    (0.3,205.6) .. controls (6.3,146.6) and (132.3,154.63) .. (132.87,207.57) ;

\draw    (0.3,205.6) .. controls (6.2,269.57) and (126.3,263.63) .. (132.87,207.57) ;

\draw  [dash pattern={on 0.84pt off 2.51pt}] (155.22,95.68) .. controls (155.22,93.89) and (156.85,92.45) .. (158.87,92.45) .. controls (160.88,92.45) and (162.52,93.89) .. (162.52,95.68) .. controls (162.52,97.46) and (160.88,98.9) .. (158.87,98.9) .. controls (156.85,98.9) and (155.22,97.46) .. (155.22,95.68) -- cycle ;
\draw   (304.3,236.63) .. controls (304.3,221.16) and (316.84,208.63) .. (332.3,208.63) .. controls (347.76,208.63) and (360.3,221.16) .. (360.3,236.63) .. controls (360.3,252.09) and (347.76,264.63) .. (332.3,264.63) .. controls (316.84,264.63) and (304.3,252.09) .. (304.3,236.63) -- cycle ;
\draw   (189.3,205.63) .. controls (189.3,190.16) and (201.84,177.63) .. (217.3,177.63) .. controls (232.76,177.63) and (245.3,190.16) .. (245.3,205.63) .. controls (245.3,221.09) and (232.76,233.63) .. (217.3,233.63) .. controls (201.84,233.63) and (189.3,221.09) .. (189.3,205.63) -- cycle ;
\draw   (133.3,205.63) .. controls (133.3,190.16) and (145.84,177.63) .. (161.3,177.63) .. controls (176.76,177.63) and (189.3,190.16) .. (189.3,205.63) .. controls (189.3,221.09) and (176.76,233.63) .. (161.3,233.63) .. controls (145.84,233.63) and (133.3,221.09) .. (133.3,205.63) -- cycle ;
\draw    (331.06,149.31) .. controls (356.49,145.88) and (414.77,151.02) .. (424.3,179.63) ;

\draw    (304.3,236.63) .. controls (301.3,247.63) and (361.3,252.63) .. (360.3,236.63) ;

\draw  [dash pattern={on 4.5pt off 4.5pt}]  (304.3,236.63) .. controls (298.3,228.63) and (362.3,232.63) .. (360.3,236.63) ;

\draw    (189.3,205.63) .. controls (186.3,216.63) and (246.3,221.63) .. (245.3,205.63) ;

\draw  [dash pattern={on 4.5pt off 4.5pt}]  (189.3,206.63) .. controls (183.3,198.63) and (247.3,202.63) .. (245.3,206.63) ;

\draw    (133.3,205.63) .. controls (130.3,216.63) and (190.3,221.63) .. (189.3,205.63) ;

\draw  [dash pattern={on 4.5pt off 4.5pt}]  (133.3,205.63) .. controls (127.3,197.63) and (191.3,201.63) .. (189.3,205.63) ;

\draw   (275.3,143.63) .. controls (275.3,128.16) and (287.84,115.63) .. (303.3,115.63) .. controls (318.76,115.63) and (331.3,128.16) .. (331.3,143.63) .. controls (331.3,159.09) and (318.76,171.63) .. (303.3,171.63) .. controls (287.84,171.63) and (275.3,159.09) .. (275.3,143.63) -- cycle ;
\draw   (276.3,87.63) .. controls (276.3,72.16) and (288.84,59.63) .. (304.3,59.63) .. controls (319.76,59.63) and (332.3,72.16) .. (332.3,87.63) .. controls (332.3,103.09) and (319.76,115.63) .. (304.3,115.63) .. controls (288.84,115.63) and (276.3,103.09) .. (276.3,87.63) -- cycle ;
\draw    (275.3,143.63) .. controls (272.3,154.63) and (332.3,159.63) .. (331.3,143.63) ;

\draw  [dash pattern={on 4.5pt off 4.5pt}]  (275.3,144.63) .. controls (269.3,136.63) and (333.3,140.63) .. (331.3,144.63) ;

\draw    (276.3,87.63) .. controls (273.3,98.63) and (333.3,103.63) .. (332.3,87.63) ;

\draw  [dash pattern={on 4.5pt off 4.5pt}]  (276.3,87.63) .. controls (270.3,79.63) and (334.3,83.63) .. (332.3,87.63) ;

\draw    (245.3,205.63) .. controls (247.68,216.73) and (235.68,181.02) .. (281.39,161.02) ;

\draw   (278.3,31.63) .. controls (278.3,16.16) and (290.84,3.63) .. (306.3,3.63) .. controls (321.76,3.63) and (334.3,16.16) .. (334.3,31.63) .. controls (334.3,47.09) and (321.76,59.63) .. (306.3,59.63) .. controls (290.84,59.63) and (278.3,47.09) .. (278.3,31.63) -- cycle ;
\draw    (278.3,31.63) .. controls (275.3,42.63) and (335.3,47.63) .. (334.3,31.63) ;

\draw  [dash pattern={on 4.5pt off 4.5pt}]  (278.3,31.63) .. controls (272.3,23.63) and (336.3,27.63) .. (334.3,31.63) ;

\draw (258,30) node [scale=1.]  {$\mathbb{S}^2_{z,3} $};

\draw (258,90) node [scale=1.]  {$\mathbb{S}^2_{z,2} $};

\draw (258,150) node [scale=1.]  {$\mathbb{S}^2_{z,1} $};

\draw (340,280) node [scale=1.]  {$\mathbb{S}^2_{a_2,1} $};
\draw (220,250) node [scale=1.]  {$\mathbb{S}^2_{a_1,2} $};
\draw (160,250) node [scale=1.]  {$\mathbb{S}^2_{a_1,1} $};

\draw (70,150) node [scale=1.]  {$\overline{\Sigma_{0}^1}$};

\draw (400,140) node [scale=1]  {$\overline{\Sigma_{0}^2}$};

\draw (121,207) node [xslant =0.01]  [scale=0.9]{$a_1^- $\,};
\draw (259,207) node [xslant=0.01]  [scale=0.9]{\,$a_1^+$};

\draw (304,177) node [scale=3.]  {$\cdot $};
\draw (313,178) node [xslant=0.17]  [scale=0.9]{$z$};

\draw (132,210) node [scale=3.2]  {$\cdot $};

\draw (247,210) node [scale=3.2]  {$\cdot $};

\draw (304,239) node [scale=3.2]  {$\cdot $};

\draw (361,239) node [scale=3.2]  {$\cdot $};

\draw (290,236) node [xslant =0.01] [scale=0.9] {$a_2^- $};
\draw (375,236) node [xslant=0.01] [scale=0.9] {$a_2^+$};
\end{tikzpicture}
\end{center}
\begin{center}
{\small Figure 2. The first level}
\end{center}

Then $F_\infty$ can be extended to a continuous mapping on $\Sigma_{L_1}$, by setting 
\begin{equation}
\begin{cases} 
F_\infty | _{\mathbb S^2_{z, i}} = F^i_{z, \infty}, & \forall z\in \mathcal C(\{\widetilde F_n\}),\ \  i=1, \cdots, l(z),\\
F_\infty |_{\mathbb S^2_{a, j}} = F^j_{a, \infty}, & \forall a\in \mathscr N,\ \  j=1, \cdots, \tilde l(a)-1.
\end{cases}
\end{equation}

Moreover, for each $z, i$ (resp. $a, j$) and $n\in \mathbb N$, let $V^i_{z, n} \subset \mathbb S^2_{z, i}$ (resp. $V^j_{a, n} \subset \mathbb S^2_{a, j}$) be the domain of $F^i_{z, n}$ (resp $F^j_{a, n}$). Then $\{ V^i_{z, n}\}_{z , i}$, $\{ V^j_{a, n}\}_{a,j}$ are pairwise disjoint open sets in $\Sigma_{L_1}$ and 
\begin{equation}
\begin{split}
\bigcup_n V^i_{z, n} &= \mathbb S^2_{z, i} \setminus\{\pm 1\}, \ \ 1\leq i\leq l(z)-1, \\ 
\bigcup_n V^l_{z, n} &=\mathbb S^2_{z, l} \setminus\{-1\}, \ \ l=l(z),\\
\bigcup_n V^j_{a, n} &= \mathbb S^2_{a, j} \setminus\{\pm 1\}, \ \ 1\leq j\leq\tilde l(a)-1.
\end{split}
\end{equation}
For each $z\in \mathcal C(\{\widetilde F_n\})$, $i\in \{1, \cdots, l(z)\}$ and $n\in \mathbb N$, there is an open set $U^i_{z, n} \subset \Sigma$ and a diffeomorphism $\varphi^i_{z, n} : U^i_{z, n} \to V^i_{z, n}$ so that $F^i_{z, n} = F_n \circ (\varphi^i_{z, n})^{-1}$ (For example, when $i \neq l(z)$, we have 
$$\varphi^i_{z, n} = (\psi^{\tiny\mbox p}_n \circ \theta_n \circ \phi_{z, n} \circ T_{d_n^i} \circ \Phi)^{-1}$$
by (\ref{dfn of tilde F_n}), (\ref{dfn of hat F_n}) and (\ref{dfn of F^i_z, n})). Similarly, for each $a\in \mathscr N$, $j= 1, \cdots, \tilde l(a)-1$, there is an open set $U^j_{a, n}$ in $\Sigma$ and a diffeomorphism $\varphi^j_{a, n} : U^j_{a, n} \to V^j_{a, n}$ so that $F^j_{a, n} = F_n \circ (\varphi^j_{a, n})^{-1}$ by (\ref{dfn of F^i_a, n}). Lastly, define 
\begin{equation}
U^{\tiny\mbox p}_n = \Sigma \setminus \left( \bigcup_{z\in \mathcal C(\{\widetilde F_n\})}\overline{ \theta_n(\mathbb D_{z_n} (e^{-e_n^0}))} \cup \bigcup_{a\in \mathscr N} \tilde \phi_{a,n} \big(\mathbb S^1 \times [\tilde e_n^0, \tilde c_n^{\tilde l(a)}]\big) \right), \ \ \ V^{\tiny\mbox p}_n = \varphi^{\tiny\mbox p}_n(U^{\tiny\mbox p}_n). 
\end{equation}
Let $U^0_n \subset \Sigma$, $V^1_n \subset \Sigma_{L_1}$ and $\varphi^{0,1}_n : U^0_n\to V^1_n$ be given by 
\begin{equation}
\begin{split}
U^0_n &= U^{\tiny\mbox p}_n \cup \bigcup_{z\in \mathcal C(\{\widetilde F_n\})} \bigcup_{i=1}^l U^i_{z, n} \cup  \bigcup_{a\in \mathscr N} \bigcup_{j=1}^{\tilde l-1} U^j_{a, n} ,\\
\varphi^{0,1}_n &= \varphi^{\tiny\mbox p}_n \cup \bigcup_{z, i} \varphi^i_{z, n} \cup \bigcup_{a, j} \varphi^j_{a, n}, \\
V^1_n &= \varphi^{0,1}_n (U^0_n).
\end{split}
\end{equation}

\begin{center}
\tikzset{every picture/.style={line width=0.55pt}} 

\begin{tikzpicture}[x=0.55pt,y=0.55pt,yscale=-1,xscale=1]

\draw  [color={rgb, 255:red, 155; green, 155; blue, 155 }  ,draw opacity=0.37 ] (33,435.6) .. controls (33,414.28) and (62.17,397) .. (98.15,397) .. controls (134.13,397) and (163.3,414.28) .. (163.3,435.6) .. controls (163.3,456.92) and (134.13,474.2) .. (98.15,474.2) .. controls (62.17,474.2) and (33,456.92) .. (33,435.6) -- cycle ;
\draw [color={rgb, 255:red, 0; green, 0; blue, 0 }  ,draw opacity=1 ][line width=1.5]    (62.3,432.2) .. controls (78.3,450.2) and (115.3,449.2) .. (132.3,432.2) ;

\draw [color={rgb, 255:red, 0; green, 0; blue, 0 }  ,draw opacity=1 ][line width=1.5]    (71.9,438.6) .. controls (80.92,419.78) and (113.12,419.98) .. (122.12,438.98) ;

\draw  [color={rgb, 255:red, 155; green, 155; blue, 155 }  ,draw opacity=0.37 ] (163.3,435.6) .. controls (163.3,421.79) and (174.49,410.6) .. (188.3,410.6) .. controls (202.11,410.6) and (213.3,421.79) .. (213.3,435.6) .. controls (213.3,449.41) and (202.11,460.6) .. (188.3,460.6) .. controls (174.49,460.6) and (163.3,449.41) .. (163.3,435.6) -- cycle ;
\draw  [color={rgb, 255:red, 155; green, 155; blue, 155 }  ,draw opacity=0.37 ] (213.3,435.6) .. controls (213.3,421.79) and (224.49,410.6) .. (238.3,410.6) .. controls (252.11,410.6) and (263.3,421.79) .. (263.3,435.6) .. controls (263.3,449.41) and (252.11,460.6) .. (238.3,460.6) .. controls (224.49,460.6) and (213.3,449.41) .. (213.3,435.6) -- cycle ;
\draw [color={rgb, 255:red, 155; green, 155; blue, 155 }  ,draw opacity=0.37 ]   (385.6,449.1) .. controls (372.8,452.3) and (367.67,472.47) .. (386.87,480.47) .. controls (406.07,488.47) and (434.33,471.38) .. (433.3,436.2) ;

\draw  [color={rgb, 255:red, 155; green, 155; blue, 155 }  ,draw opacity=0.37 ] (323.83,465) .. controls (323.83,451.19) and (335.03,440) .. (348.83,440) .. controls (362.64,440) and (373.83,451.19) .. (373.83,465) .. controls (373.83,478.81) and (362.64,490) .. (348.83,490) .. controls (335.03,490) and (323.83,478.81) .. (323.83,465) -- cycle ;
\draw [color={rgb, 255:red, 155; green, 155; blue, 155 }  ,draw opacity=0.37 ]   (263.3,435.6) .. controls (266.92,489.48) and (318.52,494.28) .. (323.83,465) ;

\draw [color={rgb, 255:red, 155; green, 155; blue, 155 }  ,draw opacity=0.37 ]   (296.87,435.8) .. controls (296.87,456.47) and (326.2,445.13) .. (323.83,465) ;

\draw [color={rgb, 255:red, 155; green, 155; blue, 155 }  ,draw opacity=0.37 ]   (385.6,449.1) .. controls (388.52,448.47) and (401.52,441.13) .. (401.18,432.8) ;

\draw [color={rgb, 255:red, 0; green, 0; blue, 0 }  ,draw opacity=1 ][line width=1.5]    (299.8,444.63) .. controls (322.09,423.2) and (381.8,428.35) .. (395.52,442.35) ;

\draw  [color={rgb, 255:red, 155; green, 155; blue, 155 }  ,draw opacity=0.37 ] (283.83,386) .. controls (283.83,372.19) and (295.03,361) .. (308.83,361) .. controls (322.64,361) and (333.83,372.19) .. (333.83,386) .. controls (333.83,399.81) and (322.64,411) .. (308.83,411) .. controls (295.03,411) and (283.83,399.81) .. (283.83,386) -- cycle ;
\draw  [color={rgb, 255:red, 155; green, 155; blue, 155 }  ,draw opacity=0.37 ] (283.83,336) .. controls (283.83,322.19) and (295.03,311) .. (308.83,311) .. controls (322.64,311) and (333.83,322.19) .. (333.83,336) .. controls (333.83,349.81) and (322.64,361) .. (308.83,361) .. controls (295.03,361) and (283.83,349.81) .. (283.83,336) -- cycle ;
\draw  [color={rgb, 255:red, 155; green, 155; blue, 155 }  ,draw opacity=0.37 ] (283.83,286) .. controls (283.83,272.19) and (295.03,261) .. (308.83,261) .. controls (322.64,261) and (333.83,272.19) .. (333.83,286) .. controls (333.83,299.81) and (322.64,311) .. (308.83,311) .. controls (295.03,311) and (283.83,299.81) .. (283.83,286) -- cycle ;
\draw [color={rgb, 255:red, 155; green, 155; blue, 155 }  ,draw opacity=0.48 ]   (7.3,169.2) .. controls (13.3,112.2) and (89.3,107.2) .. (142.3,136.7) .. controls (195.3,166.2) and (205.43,158.42) .. (227.3,151.7) .. controls (249.17,144.98) and (267.3,132.7) .. (291.3,117.7) .. controls (315.3,102.7) and (421.3,74.7) .. (451.3,116.7) .. controls (481.3,158.7) and (463.56,220.09) .. (422.56,228.59) .. controls (381.56,237.09) and (339.77,220.31) .. (323.06,213.59) ;

\draw [color={rgb, 255:red, 155; green, 155; blue, 155 }  ,draw opacity=0.48 ]   (7.3,169.2) .. controls (11.3,226.2) and (102.3,240.2) .. (143.3,217.2) ;

\draw [color={rgb, 255:red, 155; green, 155; blue, 155 }  ,draw opacity=0.48 ]   (143.3,217.2) .. controls (183.3,187.2) and (197.3,165.2) .. (323.06,213.59) ;

\draw [color={rgb, 255:red, 0; green, 0; blue, 0 }  ,draw opacity=1 ][line width=1.5]    (43.3,168.2) .. controls (47.3,189.2) and (99.3,202.2) .. (122.3,174.2) ;

\draw [color={rgb, 255:red, 0; green, 0; blue, 0 }  ,draw opacity=1 ][line width=1.5]    (49.47,178.04) .. controls (61.84,157.68) and (101.07,153.64) .. (115.07,180.84) ;

\draw [color={rgb, 255:red, 155; green, 155; blue, 155 }  ,draw opacity=0.48 ]   (318.3,176.2) .. controls (331.3,206.2) and (423.37,223.91) .. (419.3,166.2) ;

\draw [color={rgb, 255:red, 0; green, 0; blue, 0 }  ,draw opacity=1 ][line width=1.5]    (331.77,189.51) .. controls (338.97,168.71) and (401.37,167.11) .. (415.37,185.11) ;

\draw  [color={rgb, 255:red, 0; green, 0; blue, 0 }  ,draw opacity=1 ][line width=1.5]  (303.85,145.72) .. controls (298.52,132.89) and (308.28,117.04) .. (325.65,110.33) .. controls (343.01,103.61) and (361.41,108.57) .. (366.74,121.41) .. controls (372.07,134.24) and (362.31,150.09) .. (344.95,156.81) .. controls (327.58,163.52) and (309.18,158.56) .. (303.85,145.72) -- cycle ;
\draw  [fill={rgb, 255:red, 155; green, 155; blue, 155 }  ,fill opacity=1 ] (326.01,137.27) .. controls (324.8,134.36) and (327.57,130.55) .. (332.19,128.76) .. controls (336.81,126.98) and (341.54,127.89) .. (342.74,130.8) .. controls (343.95,133.71) and (341.19,137.52) .. (336.57,139.31) .. controls (331.95,141.09) and (327.22,140.18) .. (326.01,137.27) -- cycle ;
\draw [color={rgb, 255:red, 0; green, 0; blue, 0 }  ,draw opacity=1 ][line width=1.5]    (256.72,191.53) .. controls (263.92,182.73) and (263.84,143.88) .. (254.52,139.64) ;

\draw [color={rgb, 255:red, 0; green, 0; blue, 0 }  ,draw opacity=1 ][line width=1.5]  [dash pattern={on 5.63pt off 4.5pt}]  (163.87,202.1) .. controls (158.15,190.67) and (156.72,155.53) .. (161.38,146.78) ;

\draw [color={rgb, 255:red, 0; green, 0; blue, 0 }  ,draw opacity=1 ][line width=1.5]    (163.87,202.1) .. controls (171.07,193.3) and (170.7,151.02) .. (161.38,146.78) ;

\draw [color={rgb, 255:red, 0; green, 0; blue, 0 }  ,draw opacity=1 ][line width=1.5]    (256.72,191.53) .. controls (251.29,182.38) and (250.15,148.67) .. (254.52,139.64) ;

\draw [color={rgb, 255:red, 0; green, 0; blue, 0 }  ,draw opacity=1 ][line width=1.5]    (345.58,221.81) .. controls (352.15,218.1) and (353.87,203.81) .. (348.52,198.78) ;

\draw [color={rgb, 255:red, 0; green, 0; blue, 0 }  ,draw opacity=1 ][line width=1.5]    (407.87,231.67) .. controls (415.07,222.87) and (413.84,203.17) .. (404.52,198.93) ;

\draw [color={rgb, 255:red, 0; green, 0; blue, 0 }  ,draw opacity=1 ][line width=1.5]  [dash pattern={on 5.63pt off 4.5pt}]  (345.58,221.81) .. controls (342.72,216.38) and (342.72,203.53) .. (348.52,198.78) ;

\draw [color={rgb, 255:red, 0; green, 0; blue, 0 }  ,draw opacity=1 ][line width=1.5]    (407.87,230.67) .. controls (403.29,226.95) and (398.72,205.81) .. (404.52,197.93) ;

\draw [color={rgb, 255:red, 0; green, 0; blue, 0 }  ,draw opacity=1 ][line width=1.5]    (388.44,230.67) .. controls (393.87,223.24) and (393.01,209.24) .. (387.95,203.93) ;

\draw [color={rgb, 255:red, 0; green, 0; blue, 0 }  ,draw opacity=1 ][line width=1.5]    (363.01,226.67) .. controls (368.44,219.24) and (368.72,208.1) .. (363.66,202.78) ;

\draw [color={rgb, 255:red, 0; green, 0; blue, 0 }  ,draw opacity=1 ][line width=1.5]  [dash pattern={on 5.63pt off 4.5pt}]  (388.44,230.67) .. controls (383.87,222.67) and (383.29,211.53) .. (387.95,203.93) ;

\draw [color={rgb, 255:red, 0; green, 0; blue, 0 }  ,draw opacity=1 ][line width=1.5]    (363.01,226.67) .. controls (359.58,220.38) and (358.15,208.95) .. (363.66,202.78) ;

\draw [color={rgb, 255:red, 0; green, 0; blue, 0 }  ,draw opacity=1 ][line width=1.5]    (242.17,188.88) .. controls (246.67,181.38) and (246.42,153.38) .. (240.92,146.88) ;

\draw [color={rgb, 255:red, 0; green, 0; blue, 0 }  ,draw opacity=1 ][line width=1.5]  [dash pattern={on 5.63pt off 4.5pt}]  (242.17,188.88) .. controls (237.17,185.13) and (234.42,156.38) .. (240.92,146.88) ;

\draw [color={rgb, 255:red, 0; green, 0; blue, 0 }  ,draw opacity=1 ][line width=1.5]    (217.42,184.98) .. controls (212.42,181.23) and (212.17,159.48) .. (216.67,155.23) ;

\draw [color={rgb, 255:red, 0; green, 0; blue, 0 }  ,draw opacity=1 ][line width=1.5]    (217.42,185.38) .. controls (222.92,181.13) and (222.42,161.13) .. (216.67,155.63) ;

\draw [color={rgb, 255:red, 0; green, 0; blue, 0 }  ,draw opacity=1 ][line width=1.5]  [dash pattern={on 5.63pt off 4.5pt}]  (203.82,185.73) .. controls (198.82,181.98) and (198.57,161.48) .. (203.07,157.23) ;

\draw [color={rgb, 255:red, 0; green, 0; blue, 0 }  ,draw opacity=1 ][line width=1.5]    (203.82,185.73) .. controls (207.57,181.98) and (207.57,162.23) .. (203.07,157.23) ;

\draw [color={rgb, 255:red, 0; green, 0; blue, 0 }  ,draw opacity=1 ][line width=1.5]    (177.57,193.48) .. controls (172.57,189.73) and (172.32,156.73) .. (176.82,152.48) ;

\draw [color={rgb, 255:red, 0; green, 0; blue, 0 }  ,draw opacity=1 ][line width=1.5]    (177.57,193.48) .. controls (183.32,190.73) and (182.32,156.73) .. (176.82,152.48) ;

\draw [color={rgb, 255:red, 0; green, 0; blue, 0 }  ,draw opacity=1 ][line width=1.5]    (363.66,202.78) .. controls (371.61,204.77) and (381.9,204.49) .. (387.95,203.93) ;

\draw [color={rgb, 255:red, 0; green, 0; blue, 0 }  ,draw opacity=1 ][line width=1.5]    (363.01,226.67) .. controls (366.18,227.63) and (381.04,230.49) .. (388.44,230.67) ;

\draw [color={rgb, 255:red, 0; green, 0; blue, 0 }  ,draw opacity=1 ][line width=1.5]    (216.67,155.63) .. controls (224.27,153.32) and (235.47,149.52) .. (240.92,146.88) ;

\draw [color={rgb, 255:red, 0; green, 0; blue, 0 }  ,draw opacity=1 ][line width=1.5]    (217.42,185.38) .. controls (224.67,185.72) and (233.67,185.72) .. (242.17,188.88) ;

\draw [color={rgb, 255:red, 0; green, 0; blue, 0 }  ,draw opacity=1 ][line width=1.5]    (177.17,193.48) .. controls (184.87,189.52) and (194.87,184.72) .. (203.42,185.73) ;

\draw [color={rgb, 255:red, 0; green, 0; blue, 0 }  ,draw opacity=1 ][line width=1.5]    (176.82,152.48) .. controls (183.87,155.12) and (195.27,158.92) .. (203.07,157.23) ;

\draw [color={rgb, 255:red, 0; green, 0; blue, 0 }  ,draw opacity=1 ][line width=1.5]    (7.3,169.2) .. controls (12.43,113.29) and (91.77,101.96) .. (161.38,146.78) ;

\draw [color={rgb, 255:red, 0; green, 0; blue, 0 }  ,draw opacity=1 ][line width=1.5]    (7.3,169.2) .. controls (13.1,213.63) and (60.77,226.29) .. (85.1,227.63) .. controls (109.43,228.96) and (109.77,225.96) .. (125.43,224.29) .. controls (141.1,222.63) and (151.1,210.29) .. (163.87,202.1) ;

\draw [color={rgb, 255:red, 0; green, 0; blue, 0 }  ,draw opacity=1 ][line width=1.5]    (254.52,139.64) .. controls (299.43,114.87) and (277.77,124.53) .. (298.77,113.53) ;

\draw [color={rgb, 255:red, 0; green, 0; blue, 0 }  ,draw opacity=1 ][line width=1.5]    (298.77,113.53) .. controls (346.77,94.2) and (467.43,63.87) .. (466.43,166.2) ;

\draw [color={rgb, 255:red, 0; green, 0; blue, 0 }  ,draw opacity=1 ][line width=1.5]    (407.87,231.67) .. controls (442.1,231.87) and (465.1,203.53) .. (466.43,167.2) ;

\draw [color={rgb, 255:red, 0; green, 0; blue, 0 }  ,draw opacity=1 ][line width=1.5]    (404.52,197.93) .. controls (408.77,195.87) and (421.43,183.53) .. (419.3,166.2) ;

\draw [color={rgb, 255:red, 0; green, 0; blue, 0 }  ,draw opacity=1 ][line width=1.5]    (348.52,198.78) .. controls (334.1,192.87) and (325.43,188.2) .. (318.3,176.2) ;

\draw [color={rgb, 255:red, 0; green, 0; blue, 0 }  ,draw opacity=1 ][line width=1.5]    (256.72,192.53) .. controls (282.43,200.2) and (310.43,209.53) .. (345.58,222.81) ;

\draw [color={rgb, 255:red, 0; green, 0; blue, 0 }  ,draw opacity=1 ][line width=1.5]  [dash pattern={on 5.63pt off 4.5pt}]  (154.09,414.31) .. controls (150.36,422.06) and (150.07,442.91) .. (155.5,454.06) ;

\draw [color={rgb, 255:red, 0; green, 0; blue, 0 }  ,draw opacity=1 ][line width=1.5]    (172.7,415.98) .. controls (168.41,422.87) and (167.55,443.72) .. (174.12,455.72) ;

\draw [color={rgb, 255:red, 0; green, 0; blue, 0 }  ,draw opacity=1 ][line width=1.5]    (206.99,418.84) .. controls (204.98,425.15) and (204.98,442.58) .. (208.41,450.3) ;

\draw [color={rgb, 255:red, 0; green, 0; blue, 0 }  ,draw opacity=1 ][line width=1.5]    (206.99,418.84) .. controls (210.12,424.87) and (210.41,444.87) .. (208.41,450.3) ;

\draw [color={rgb, 255:red, 0; green, 0; blue, 0 }  ,draw opacity=1 ][line width=1.5]    (218.7,419.98) .. controls (216.69,426.3) and (216.69,443.72) .. (220.12,451.44) ;

\draw [color={rgb, 255:red, 0; green, 0; blue, 0 }  ,draw opacity=1 ][line width=1.5]    (219.1,419.98) .. controls (222.24,426.01) and (222.52,446.01) .. (220.52,451.44) ;

\draw [color={rgb, 255:red, 0; green, 0; blue, 0 }  ,draw opacity=1 ][line width=1.5]    (256.66,418.84) .. controls (254.65,425.15) and (254.65,442.58) .. (258.07,450.3) ;

\draw [color={rgb, 255:red, 0; green, 0; blue, 0 }  ,draw opacity=1 ][line width=1.5]    (256.66,418.84) .. controls (259.79,424.87) and (260.07,444.87) .. (258.07,450.3) ;

\draw [color={rgb, 255:red, 0; green, 0; blue, 0 }  ,draw opacity=1 ][line width=1.5]    (172.7,415.98) .. controls (175.84,424.3) and (176.41,445.15) .. (174.12,455.72) ;

\draw [color={rgb, 255:red, 0; green, 0; blue, 0 }  ,draw opacity=1 ][line width=1.5]    (153.8,414.6) .. controls (157.22,420.63) and (158.07,448.91) .. (155.22,454.34) ;

\draw [color={rgb, 255:red, 0; green, 0; blue, 0 }  ,draw opacity=1 ][line width=1.5]    (268.93,419.01) .. controls (266.36,426.72) and (267.22,445.58) .. (268.65,457.58) ;

\draw [color={rgb, 255:red, 0; green, 0; blue, 0 }  ,draw opacity=1 ][line width=1.5]    (269.26,419.01) .. controls (272.68,425.04) and (271.84,452.15) .. (268.98,457.58) ;

\draw [color={rgb, 255:red, 0; green, 0; blue, 0 }  ,draw opacity=1 ][line width=1.5]    (33,435.6) .. controls (41.71,384.04) and (134,393.47) .. (153.8,414.6) ;

\draw [color={rgb, 255:red, 0; green, 0; blue, 0 }  ,draw opacity=1 ][line width=1.5]    (33.29,435.31) .. controls (38.29,478.33) and (122.86,485.47) .. (155.5,454.06) ;

\draw [color={rgb, 255:red, 155; green, 155; blue, 155 }  ,draw opacity=0.47 ][line width=0.75]    (268.65,458.91) .. controls (279.38,485.45) and (317.13,492.45) .. (323.83,465) ;

\draw [color={rgb, 255:red, 155; green, 155; blue, 155 }  ,draw opacity=0.47 ][line width=0.75]    (296.87,435.47) .. controls (297.38,457.62) and (324.13,442.62) .. (323.83,464.67) ;

\draw [color={rgb, 255:red, 155; green, 155; blue, 155 }  ,draw opacity=0.47 ][line width=0.75]    (373.83,464.67) .. controls (373.88,442.87) and (397.63,453.37) .. (401.18,432.47) ;

\draw [color={rgb, 255:red, 155; green, 155; blue, 155 }  ,draw opacity=0.47 ][line width=0.75]    (373.83,465) .. controls (380.88,495.95) and (437.63,485.45) .. (432.63,431.2) ;

\draw [color={rgb, 255:red, 0; green, 0; blue, 0 }  ,draw opacity=1 ][line width=1.5]    (332.38,393.87) .. controls (370.63,389.37) and (430.13,404.37) .. (432.63,430.87) ;

\draw [color={rgb, 255:red, 0; green, 0; blue, 0 }  ,draw opacity=1 ][line width=1.5]    (269.26,420.01) .. controls (275.21,412.12) and (283.21,407.87) .. (291.71,403.62) ;

\draw [color={rgb, 255:red, 0; green, 0; blue, 0 }  ,draw opacity=1 ][line width=1.5]    (298.5,408.07) .. controls (283.5,423.4) and (348.17,410.4) .. (327.5,402.73) ;

\draw [color={rgb, 255:red, 155; green, 155; blue, 155 }  ,draw opacity=1 ][line width=1.5]  [dash pattern={on 1.69pt off 2.76pt}]  (298.5,408.07) .. controls (305.17,403.73) and (321.5,401.07) .. (327.5,402.73) ;

\draw  [dash pattern={on 4.5pt off 4.5pt}]  (289.17,401.4) .. controls (293.83,392.4) and (320.5,387.4) .. (332.83,391.07) ;

\draw    (288.83,371.07) .. controls (300.83,373.4) and (314.17,372.73) .. (324.5,366.4) ;

\draw [color={rgb, 255:red, 0; green, 0; blue, 0 }  ,draw opacity=1 ][line width=1.5]    (386.87,480.97) .. controls (393.75,479.2) and (391.25,448.2) .. (385.6,449.6) ;

\draw [color={rgb, 255:red, 0; green, 0; blue, 0 }  ,draw opacity=1 ][line width=1.5]    (386.87,480.97) .. controls (381.25,480.2) and (379.75,453.7) .. (385.6,449.6) ;

\draw [color={rgb, 255:red, 0; green, 0; blue, 0 }  ,draw opacity=1 ][line width=1.5]    (313.75,479.2) .. controls (320.25,476.2) and (319.25,451.7) .. (313.1,450.1) ;

\draw [color={rgb, 255:red, 0; green, 0; blue, 0 }  ,draw opacity=1 ][line width=1.5]  [dash pattern={on 5.63pt off 4.5pt}]  (313.75,479.2) .. controls (310.25,476.7) and (309.25,455.2) .. (313.1,450.1) ;

\draw [color={rgb, 255:red, 0; green, 0; blue, 0 }  ,draw opacity=1 ][line width=1.5]    (330.75,480.7) .. controls (327.25,478.2) and (325.75,453.2) .. (329.6,448.1) ;

\draw [color={rgb, 255:red, 0; green, 0; blue, 0 }  ,draw opacity=1 ][line width=1.5]  [dash pattern={on 5.63pt off 4.5pt}]  (368.08,481.7) .. controls (362.44,479.42) and (360,454.31) .. (366.93,449.1) ;

\draw [color={rgb, 255:red, 0; green, 0; blue, 0 }  ,draw opacity=1 ][line width=1.5]    (330.75,480.7) .. controls (335.25,477.7) and (334.25,453.2) .. (329.6,448.1) ;

\draw [color={rgb, 255:red, 0; green, 0; blue, 0 }  ,draw opacity=1 ][line width=1.5]    (368.08,482.2) .. controls (370.22,476.59) and (372.89,458.14) .. (366.93,449.6) ;

\draw    (288.83,371.07) .. controls (295.25,367.2) and (318.75,365.2) .. (324.5,366.4) ;

\draw    (290.83,353.57) .. controls (297.25,356.7) and (317.25,357.7) .. (328.75,352.2) ;

\draw  [dash pattern={on 4.5pt off 4.5pt}]  (290.83,353.57) .. controls (301.75,349.2) and (318.5,348.5) .. (328.75,352.2) ;

\draw    (289.33,319.57) .. controls (300.25,315.2) and (317,314.5) .. (327.25,318.2) ;

\draw    (289.33,319.57) .. controls (300.25,326.2) and (321.25,322.7) .. (327.25,318.2) ;

\draw  [dash pattern={on 4.5pt off 4.5pt}]  (291.33,304.07) .. controls (302.25,299.7) and (316,299.5) .. (326.25,303.2) ;

\draw    (291.33,304.07) .. controls (298.25,306.7) and (314.75,307.2) .. (326.25,303.2) ;

\draw [color={rgb, 255:red, 0; green, 0; blue, 0 }  ,draw opacity=1 ][line width=1.5]    (172.7,415.98) .. controls (180.76,408.76) and (200.43,407.76) .. (206.99,418.84) ;

\draw [color={rgb, 255:red, 0; green, 0; blue, 0 }  ,draw opacity=1 ][line width=1.5]    (174.12,455.72) .. controls (184.09,463.42) and (204.09,460.76) .. (208.41,450.3) ;

\draw [color={rgb, 255:red, 0; green, 0; blue, 0 }  ,draw opacity=1 ][line width=1.5]    (220.12,451.44) .. controls (227.76,463.76) and (253.09,462.42) .. (258.41,450.3) ;

\draw [color={rgb, 255:red, 0; green, 0; blue, 0 }  ,draw opacity=1 ][line width=1.5]    (219.1,421.65) .. controls (227.83,406.09) and (251.16,407.76) .. (257.39,418.84) ;

\draw [color={rgb, 255:red, 0; green, 0; blue, 0 }  ,draw opacity=1 ][line width=1.5]    (313.75,479.2) .. controls (301.28,486.03) and (275.57,478.03) .. (268.65,457.91) ;

\draw [color={rgb, 255:red, 0; green, 0; blue, 0 }  ,draw opacity=1 ][line width=1.5]    (313.1,450.43) .. controls (304.14,448.36) and (297,446.36) .. (296.87,434.8) ;

\draw [color={rgb, 255:red, 0; green, 0; blue, 0 }  ,draw opacity=1 ][line width=1.5]    (366.93,448.1) .. controls (360.33,437.34) and (339.47,435.06) .. (328.93,448.1) ;

\draw [color={rgb, 255:red, 0; green, 0; blue, 0 }  ,draw opacity=1 ][line width=1.5]    (368.75,480.7) .. controls (358.14,491.06) and (339,492.77) .. (330.75,480.7) ;

\draw [color={rgb, 255:red, 0; green, 0; blue, 0 }  ,draw opacity=1 ][line width=1.5]    (401.18,433.3) .. controls (400.71,442.27) and (392.71,445.99) .. (385.6,449.6) ;

\draw [color={rgb, 255:red, 0; green, 0; blue, 0 }  ,draw opacity=1 ][line width=1.5]    (432.63,431.37) .. controls (435.85,473.7) and (404.14,487.99) .. (386.87,480.97) ;

\draw    (289.17,401.4) .. controls (282.14,396.06) and (281.85,378.91) .. (288.83,371.07) ;

\draw    (332.83,391.07) .. controls (335.85,382.34) and (331.57,370.63) .. (324.5,366.4) ;

\draw    (290.83,353.57) .. controls (283.28,348.91) and (281.85,327.2) .. (289.33,319.57) ;

\draw    (327.25,318.2) .. controls (334.71,326.34) and (336.71,340.63) .. (328.75,352.2) ;

\draw    (308.83,261) .. controls (291.28,258.63) and (272.71,286.63) .. (291.33,304.07) ;

\draw    (308.83,261) .. controls (328.43,259.49) and (342.71,286.63) .. (326.25,303.2) ;

\draw    (289.17,401.4) .. controls (300.5,409.73) and (328.83,402.73) .. (332.83,391.07) ;

\draw  [fill={rgb, 255:red, 155; green, 155; blue, 155 }  ,fill opacity=1 ,even odd rule] (319.39,139.32) .. controls (317.37,133.98) and (322.57,127.06) .. (331.02,123.86) .. controls (339.46,120.67) and (347.94,122.41) .. (349.97,127.75) .. controls (351.99,133.09) and (346.78,140.01) .. (338.34,143.21) .. controls (329.9,146.4) and (321.41,144.66) .. (319.39,139.32)(315.32,140.86) .. controls (312.44,133.27) and (318.78,123.83) .. (329.48,119.79) .. controls (340.17,115.74) and (351.17,118.61) .. (354.04,126.21) .. controls (356.91,133.8) and (350.58,143.24) .. (339.88,147.28) .. controls (329.19,151.33) and (318.19,148.46) .. (315.32,140.86) ;
\draw  [fill={rgb, 255:red, 155; green, 155; blue, 155 }  ,fill opacity=1 ,even odd rule] (310.1,142.84) .. controls (306.56,133.48) and (314.69,121.73) .. (328.26,116.59) .. controls (341.84,111.45) and (355.71,114.87) .. (359.26,124.23) .. controls (362.8,133.59) and (354.67,145.34) .. (341.09,150.48) .. controls (327.52,155.62) and (313.64,152.2) .. (310.1,142.84)(307.47,143.83) .. controls (303.38,133.02) and (312.25,119.65) .. (327.27,113.96) .. controls (342.3,108.27) and (357.79,112.43) .. (361.88,123.24) .. controls (365.97,134.05) and (357.11,147.42) .. (342.09,153.11) .. controls (327.06,158.8) and (311.57,154.64) .. (307.47,143.83) ;
\draw    (210.1,243.8) -- (209.45,311.8) ;
\draw [shift={(209.43,313.8)}, rotate = 270.55] [color={rgb, 255:red, 0; green, 0; blue, 0 }  ][line width=0.75]    (10.93,-3.29) .. controls (6.95,-1.4) and (3.31,-0.3) .. (0,0) .. controls (3.31,0.3) and (6.95,1.4) .. (10.93,3.29)   ;

\draw [color={rgb, 255:red, 0; green, 0; blue, 0 }  ,draw opacity=1 ]   (305.04,100.64) -- (333.34,132.05) ;
\draw [shift={(334.68,133.53)}, rotate = 227.98] [color={rgb, 255:red, 0; green, 0; blue, 0 }  ,draw opacity=1 ][line width=0.75]    (10.93,-3.29) .. controls (6.95,-1.4) and (3.31,-0.3) .. (0,0) .. controls (3.31,0.3) and (6.95,1.4) .. (10.93,3.29)   ;

\draw [color={rgb, 255:red, 0; green, 0; blue, 0 }  ,draw opacity=1 ]   (331.44,87.04) -- (333.56,118.51) ;
\draw [shift={(333.7,120.5)}, rotate = 266.14] [color={rgb, 255:red, 0; green, 0; blue, 0 }  ,draw opacity=1 ][line width=0.75]    (10.93,-3.29) .. controls (6.95,-1.4) and (3.31,-0.3) .. (0,0) .. controls (3.31,0.3) and (6.95,1.4) .. (10.93,3.29)   ;

\draw [color={rgb, 255:red, 0; green, 0; blue, 0 }  ,draw opacity=1 ]   (363.04,75.84) -- (344.64,110.98) ;
\draw [shift={(343.71,112.75)}, rotate = 297.64] [color={rgb, 255:red, 0; green, 0; blue, 0 }  ,draw opacity=1 ][line width=0.75]    (10.93,-3.29) .. controls (6.95,-1.4) and (3.31,-0.3) .. (0,0) .. controls (3.31,0.3) and (6.95,1.4) .. (10.93,3.29)   ;

\draw [color={rgb, 255:red, 0; green, 0; blue, 0 }  ,draw opacity=1 ]   (218.1,99.2) -- (276.99,136.47) ;
\draw [shift={(278.68,137.53)}, rotate = 212.32999999999998] [color={rgb, 255:red, 0; green, 0; blue, 0 }  ,draw opacity=1 ][line width=0.75]    (10.93,-3.29) .. controls (6.95,-1.4) and (3.31,-0.3) .. (0,0) .. controls (3.31,0.3) and (6.95,1.4) .. (10.93,3.29)   ;

\draw [color={rgb, 255:red, 0; green, 0; blue, 0 }  ,draw opacity=1 ]   (180.1,108.2) -- (136.55,149.82) ;
\draw [shift={(135.1,151.2)}, rotate = 316.3] [color={rgb, 255:red, 0; green, 0; blue, 0 }  ,draw opacity=1 ][line width=0.75]    (10.93,-3.29) .. controls (6.95,-1.4) and (3.31,-0.3) .. (0,0) .. controls (3.31,0.3) and (6.95,1.4) .. (10.93,3.29)   ;

\draw [color={rgb, 255:red, 0; green, 0; blue, 0 }  ,draw opacity=1 ]   (225.1,379.2) -- (283.99,416.47) ;
\draw [shift={(285.68,417.53)}, rotate = 212.32999999999998] [color={rgb, 255:red, 0; green, 0; blue, 0 }  ,draw opacity=1 ][line width=0.75]    (10.93,-3.29) .. controls (6.95,-1.4) and (3.31,-0.3) .. (0,0) .. controls (3.31,0.3) and (6.95,1.4) .. (10.93,3.29)   ;

\draw [color={rgb, 255:red, 0; green, 0; blue, 0 }  ,draw opacity=1 ]   (192.1,379.2) -- (135.72,420.03) ;
\draw [shift={(134.1,421.2)}, rotate = 324.09000000000003] [color={rgb, 255:red, 0; green, 0; blue, 0 }  ,draw opacity=1 ][line width=0.75]    (10.93,-3.29) .. controls (6.95,-1.4) and (3.31,-0.3) .. (0,0) .. controls (3.31,0.3) and (6.95,1.4) .. (10.93,3.29)   ;

\draw (186,475) node   {$V^{1}_{a_{1} ,n}$};
\draw (239.4,475) node   {$V^{2}_{a_{1} ,n}$};
\draw (354,503) node   {$V^{1}_{a_{2} ,n}$};
\draw (191,205) node   {$U^{1}_{a_{1} ,n}$};
\draw (237,206) node   {$U^{2}_{a_{1} ,n}$};
\draw (383,245) node   {$U^{1}_{a_{2} ,n}$};
\draw (349,376) node   {$V^{1}_{z,n}$};
\draw (349,335) node   {$V^{2}_{z,n}$};
\draw (349,286) node   {$V^{3}_{z,n}$};
\draw (190,278.2) node   {$\varphi ^{0,1}_{n}$};
\draw (372,64) node   {$U^{1}_{z,n}$};
\draw (335,75) node   {$U^{2}_{z,n}$};
\draw (303.8,86) node   {$U^{3}_{z,n}$};
\draw (200.8,92) node   {$U^{\tiny\mbox p}_{n}$};
\draw (209.8,370) node   {$V^{\tiny\mbox p}_{n}$};

\end{tikzpicture}
\end{center}

\begin{center}
{\small Figure 3. Construction of mappings at the first level}
\end{center}

\vspace{.2cm}

\noindent{\bf Step 5} - {\it Higher levels of bubbles}. Note that $\Sigma_{L_1}$ decomposes into the principal component $\overline{\Sigma}_0$ and the bubbling components $\{ \mathbb S^2_{z, i}\}$ ,$
\{ \mathbb S^2_{a, j}\}$. On each component there is a fixed conformal structure, given by $h_0$ on $\overline{\Sigma}_0$ and the round metric on each bubbling component. We call this a conformal structure on $\Sigma_{L_1}$ and is denoted $h^1$. 

From Step 4, let the sequence $\{F^1_n : V^1_n\to M\}$ be given by $F^1_n := F_n \circ (\varphi^{0,1}_n)^{-1}$. Let $\mathcal C_1 :=\mathcal C(\{ F^1_n\})$ be the blowup set of this new sequence. From Step 2 and Step 3, 
\begin{equation}
\mathcal C_1 = \bigcup_{z, i} \mathcal C (\{ F^i_{z, n}\}) \cup \bigcup_{a, j} \mathcal C (\{ F^j_{a, n}\}).
\end{equation}
Moreover, by construction in Steps 1-3, $\{F^1_n\}$ converges locally smoothly to $F_\infty$ in $\Sigma_{L_1} \setminus (\mathcal C_1 \cup P_1)$, where $P_1$ is the set of non-smooth points in $\Sigma_{L_1}$. 

Now we repeat Steps 1-4 for the sequence $\{F^1_n\}$. Then the followings hold:
\begin{enumerate}
\item There is a stratified surface $\Sigma_{L_2}$ formed by attaching finitely many $\mathbb S^2$'s to $\Sigma_{L_1}$ at $\mathcal C_1$. 
\item For each $n\in \mathbb N$, there are $U^1_n \subset V^1_n$, $V^2_n \subset \Sigma_{L_2}$ and a diffeomorphism $\varphi^{1, 2}_n : U^1_n \to V^2_n$.
\item $\cup_n U^1_n = \Sigma_{L_1} \setminus (P_1\cup \mathcal C_1)$ and $\cup_n V^2_n = \Sigma_{L_2} \setminus P_2$.
\item Identifying $\Sigma_{L_1} \subset \Sigma_{L_2}$, $F_\infty$ extends to a continuous mapping on $\Sigma_{L_2}$. When restricted to each component, $F_\infty$ is either constant or a branched conformal HSL immersion, and
\item The sequence $\{F^2_n: V^2_n \to M\}$ defined by $F^2_n = F^1_n \circ (\varphi^{1,2}_n)^{-1}$ for each $n\in \mathbb N$ converges locally smoothly to $F_\infty$ on $\Sigma_{L_2} \setminus \{P_2 \cup \mathcal C_2\}$, where $\mathcal C_2 = \mathcal C(\{F^2_n\})$. 
\end{enumerate}
Indeed, the constructions in Steps 1-4 imply that there is $\delta_n \to 0$ so that 
\begin{equation}
V^1_n \setminus \bigcup_{z_1\in \mathcal C_1} B_{\delta_n} (z_1) \subset U^1_n
\end{equation}
and $\varphi^{1, 2}_n$ can be chosen to be the identity map in this open set (after identifying $\Sigma_{L_1} \subset \Sigma_{L_2}$).

Now assume that $\Sigma_{L_k}$, $V^k_n$, $F^k_n : V^k_n \to M$ has been defined for some $k$ and all $n$.  If the blowup set $\mathcal C_k := \mathcal C (\{ F^k_n\})$ is nonempty, we repeat Steps 1-4 to construct $\Sigma_{L_{k+1}}$, the open sets $U^k_n \subset V^k_n$, $V^{k+1}_n\subset \Sigma_{L_{k+1}}$ and diffeomorphisms $\varphi^{k, k+1}_n : U^k_n \to V^{k+1}_n$; and define $F^{k+1}_n = F^k_n \circ (\varphi^{k, k+1}_n)^{-1}$. 

Note that the above procedure must stop at finitely many steps: that is, there is $k_0$ so that the blowup set $\mathcal C_{k_0}:= \mathcal C(\{ F^{k_0}_n\})$ is empty. This is true from the construction in Step 2. If $z$ is in the blowup set $\mathcal C_k$ for some $k$, we choose $z_n, r_n$ as in \cite[(2.15)]{CLi}. In particular, by (\ref{outermost bubbles takes away W}), the outermost bubbles has no blowup point. In particular, if $C$ is the bound of the Willmore energy of $\{F^k_n\}$, then the number of elements in $\mathcal C_{k+1}$ is less than $2C/\varepsilon _2^2 - \varepsilon _2^2 /2$. 

\tikzset{every picture/.style={line width=0.55pt}} 

\begin{center}
\tikzset{every picture/.style={line width=0.55pt}} 

\begin{tikzpicture}[x=0.55pt,y=0.55pt,yscale=-1,xscale=1]

\draw  [color={rgb, 255:red, 200; green, 200; blue, 200 }  ,draw opacity=1 ][line width=0.75] [dashed] (14,242.6) .. controls (14,221.28) and (43.17,204) .. (79.15,204) .. controls (115.13,204) and (144.3,221.28) .. (144.3,242.6) .. controls (144.3,263.92) and (115.13,281.2) .. (79.15,281.2) .. controls (43.17,281.2) and (14,263.92) .. (14,242.6) -- cycle ;
\draw [color={rgb, 255:red, 200; green, 200; blue, 200 }  ,draw opacity=1 ][line width=0.75][dashed]   (43.3,239.2) .. controls (59.3,257.2) and (96.3,256.2) .. (113.3,239.2) ;

\draw [color={rgb, 255:red, 200; green, 200; blue, 200 }  ,draw opacity=1 ][line width=0.75] [dashed]   (52.9,245.6) .. controls (61.92,226.78) and (94.12,226.98) .. (103.12,245.98) ;

\draw  [color={rgb, 255:red, 200; green, 200; blue, 200 }  ,draw opacity=1 ][line width=0.75] [dashed] (144.3,242.6) .. controls (144.3,228.79) and (155.49,217.6) .. (169.3,217.6) .. controls (183.11,217.6) and (194.3,228.79) .. (194.3,242.6) .. controls (194.3,256.41) and (183.11,267.6) .. (169.3,267.6) .. controls (155.49,267.6) and (144.3,256.41) .. (144.3,242.6) -- cycle ;
\draw  [color={rgb, 255:red, 200; green, 200; blue, 200 }  ,draw opacity=1 ][line width=0.75] [dashed] (194.3,242.6) .. controls (194.3,228.79) and (205.49,217.6) .. (219.3,217.6) .. controls (233.11,217.6) and (244.3,228.79) .. (244.3,242.6) .. controls (244.3,256.41) and (233.11,267.6) .. (219.3,267.6) .. controls (205.49,267.6) and (194.3,256.41) .. (194.3,242.6) -- cycle ;
\draw [color={rgb, 255:red, 200; green, 200; blue, 200 }  ,draw opacity=1 ][line width=0.75]  [dashed]  (366.6,256.1) .. controls (353.8,259.3) and (348.67,279.47) .. (367.87,287.47) .. controls (387.07,295.47) and (415.33,278.38) .. (414.3,243.2) ;

\draw  [color={rgb, 255:red, 200; green, 200; blue, 200 }  ,draw opacity=1 ][line width=0.75] [dashed] (304.83,272) .. controls (304.83,258.19) and (316.03,247) .. (329.83,247) .. controls (343.64,247) and (354.83,258.19) .. (354.83,272) .. controls (354.83,285.81) and (343.64,297) .. (329.83,297) .. controls (316.03,297) and (304.83,285.81) .. (304.83,272) -- cycle ;
\draw [color={rgb, 255:red, 200; green, 200; blue, 200 }  ,draw opacity=1 ][line width=0.75] [dashed]   (244.3,242.6) .. controls (247.92,296.48) and (299.52,301.28) .. (304.83,272) ;

\draw [color={rgb, 255:red, 200; green, 200; blue, 200 }  ,draw opacity=1 ][line width=0.75] [dashed]   (277.87,242.8) .. controls (277.87,263.47) and (307.2,252.13) .. (304.83,272) ;

\draw [color={rgb, 255:red, 200; green, 200; blue, 200 }  ,draw opacity=1 ][line width=0.75][dashed]    (366.6,256.1) .. controls (369.52,255.47) and (382.52,248.13) .. (382.18,239.8) ;

\draw [color={rgb, 255:red, 200; green, 200; blue, 200 }  ,draw opacity=1 ][line width=0.75]  [dashed]  (280.8,251.63) .. controls (303.09,230.2) and (362.8,235.35) .. (376.52,249.35) ;

\draw  [color={rgb, 255:red, 200; green, 200; blue, 200 }  ,draw opacity=1 ][line width=0.75] [dashed] (265.83,194) .. controls (265.83,180.19) and (277.03,169) .. (290.83,169) .. controls (304.64,169) and (315.83,180.19) .. (315.83,194) .. controls (315.83,207.81) and (304.64,219) .. (290.83,219) .. controls (277.03,219) and (265.83,207.81) .. (265.83,194) -- cycle ;
\draw  [color={rgb, 255:red, 200; green, 200; blue, 200 }  ,draw opacity=1 ][line width=0.75] [dashed] (265.83,144) .. controls (265.83,130.19) and (277.03,119) .. (290.83,119) .. controls (304.64,119) and (315.83,130.19) .. (315.83,144) .. controls (315.83,157.81) and (304.64,169) .. (290.83,169) .. controls (277.03,169) and (265.83,157.81) .. (265.83,144) -- cycle ;
\draw  [color={rgb, 255:red, 200; green, 200; blue, 200 }  ,draw opacity=1 ][line width=0.75] [dashed] (265.83,94) .. controls (265.83,80.19) and (277.03,69) .. (290.83,69) .. controls (304.64,69) and (315.83,80.19) .. (315.83,94) .. controls (315.83,107.81) and (304.64,119) .. (290.83,119) .. controls (277.03,119) and (265.83,107.81) .. (265.83,94) -- cycle ;
\draw [color={rgb, 255:red, 200; green, 200; blue, 200 }  ,draw opacity=1 ][line width=0.75] [dashed]   (249.65,265.91) .. controls (260.38,292.45) and (298.13,299.45) .. (304.83,272) ;

\draw [color={rgb, 255:red, 200; green, 200; blue, 200 }  ,draw opacity=1 ][line width=0.75]  [dashed]  (277.87,242.47) .. controls (278.38,264.62) and (305.13,249.62) .. (304.83,271.67) ;

\draw [color={rgb, 255:red, 200; green, 200; blue, 200 }  ,draw opacity=1 ][line width=0.75]   [dashed] (354.83,271.67) .. controls (354.88,249.87) and (378.63,260.37) .. (382.18,239.47) ;

\draw [color={rgb, 255:red, 200; green, 200; blue, 200 }  ,draw opacity=1 ][line width=0.75]  [dashed]  (354.83,272) .. controls (361.88,302.95) and (418.63,292.45) .. (413.63,238.2) ;

\draw [color={rgb, 255:red, 200; green, 200; blue, 200 }  ,draw opacity=1 ][line width=0.75] [dashed]   (314.38,201.87) .. controls (352.63,197.37) and (412.13,212.37) .. (414.63,238.87) ;

\draw [color={rgb, 255:red, 200; green, 200; blue, 200 }  ,draw opacity=1 ][line width=0.75]  [dash pattern={on 0.84pt off 2.51pt}]  (271.51,210.43) .. controls (282.94,203.86) and (305.51,200.43) .. (313.38,200.87) ;

\draw [color={rgb, 255:red, 200; green, 200; blue, 200 }  ,draw opacity=1 ][line width=0.75][dashed]    (244.3,242.6) .. controls (246.94,229.86) and (259.8,216.15) .. (271.51,210.43) ;

\draw  [color={rgb, 255:red, 150; green, 150; blue, 150 }  ,draw opacity=1 ][line width=1.5]  (315.83,143) .. controls (315.83,129.19) and (327.03,118) .. (340.83,118) .. controls (354.64,118) and (365.83,129.19) .. (365.83,143) .. controls (365.83,156.81) and (354.64,168) .. (340.83,168) .. controls (327.03,168) and (315.83,156.81) .. (315.83,143) -- cycle ;
\draw  [color={rgb, 255:red, 150; green, 150; blue, 150 }  ,draw opacity=1 ][line width=1.5]  (365.83,143) .. controls (365.83,129.19) and (377.03,118) .. (390.83,118) .. controls (404.64,118) and (415.83,129.19) .. (415.83,143) .. controls (415.83,156.81) and (404.64,168) .. (390.83,168) .. controls (377.03,168) and (365.83,156.81) .. (365.83,143) -- cycle ;
\draw  [color={rgb, 255:red, 150; green, 150; blue, 150 }  ,draw opacity=1 ][line width=1.5]  (305.83,322) .. controls (305.83,308.19) and (317.03,297) .. (330.83,297) .. controls (344.64,297) and (355.83,308.19) .. (355.83,322) .. controls (355.83,335.81) and (344.64,347) .. (330.83,347) .. controls (317.03,347) and (305.83,335.81) .. (305.83,322) -- cycle ;
\draw  [color={rgb, 255:red, 150; green, 150; blue, 150 }  ,draw opacity=1 ][line width=1.5]  (145.3,292.6) .. controls (145.3,278.79) and (156.49,267.6) .. (170.3,267.6) .. controls (184.11,267.6) and (195.3,278.79) .. (195.3,292.6) .. controls (195.3,306.41) and (184.11,317.6) .. (170.3,317.6) .. controls (156.49,317.6) and (145.3,306.41) .. (145.3,292.6) -- cycle ;
\draw  [color={rgb, 255:red, 150; green, 150; blue, 150 }  ,draw opacity=1 ][line width=1.5]  (214.83,143) .. controls (214.83,129.19) and (226.03,118) .. (239.83,118) .. controls (253.64,118) and (264.83,129.19) .. (264.83,143) .. controls (264.83,156.81) and (253.64,168) .. (239.83,168) .. controls (226.03,168) and (214.83,156.81) .. (214.83,143) -- cycle ;
\draw  [color={rgb, 255:red, 150; green, 150; blue, 150 }  ,draw opacity=1 ][line width=1.5]  (414.83,143) .. controls (414.83,129.19) and (426.03,118) .. (439.83,118) .. controls (453.64,118) and (464.83,129.19) .. (464.83,143) .. controls (464.83,156.81) and (453.64,168) .. (439.83,168) .. controls (426.03,168) and (414.83,156.81) .. (414.83,143) -- cycle ;
\draw  [color={rgb, 255:red, 0; green, 0; blue, 0 }  ,draw opacity=1 ][line width=1.65]  (364.83,93) .. controls (364.83,79.19) and (376.03,68) .. (389.83,68) .. controls (403.64,68) and (414.83,79.19) .. (414.83,93) .. controls (414.83,106.81) and (403.64,118) .. (389.83,118) .. controls (376.03,118) and (364.83,106.81) .. (364.83,93) -- cycle ;
\draw  [color={rgb, 255:red, 0; green, 0; blue, 0 }  ,draw opacity=1 ][line width=1.65]  (365.83,42) .. controls (365.83,28.19) and (377.03,17) .. (390.83,17) .. controls (404.64,17) and (415.83,28.19) .. (415.83,42) .. controls (415.83,55.81) and (404.64,67) .. (390.83,67) .. controls (377.03,67) and (365.83,55.81) .. (365.83,42) -- cycle ;
\draw  [color={rgb, 255:red, 150; green, 150; blue, 150 }  ,draw opacity=1 ]  (144.3,292.6) .. controls (153.76,300.79) and (178.36,308.79) .. (194.3,292.6) ;

\draw  [color={rgb, 255:red, 150; green, 150; blue, 150 }  ,draw opacity=1 ][dash pattern={on 4.5pt off 4.5pt}]  (194.3,292.6) .. controls (181.56,284.39) and (156.36,285.59) .. (144.3,292.6) ;

\draw  [color={rgb, 255:red, 150; green, 150; blue, 150 }  ,draw opacity=1 ] (305.1,320.2) .. controls (314.56,328.39) and (339.16,336.39) .. (355.1,320.2) ;

\draw  [color={rgb, 255:red, 150; green, 150; blue, 150 }  ,draw opacity=1 ][dash pattern={on 4.5pt off 4.5pt}]  (355.1,320.2) .. controls (342.36,311.99) and (317.16,313.19) .. (305.1,320.2) ;

\draw  [color={rgb, 255:red, 150; green, 150; blue, 150 }  ,draw opacity=1 ]  (215.1,142.2) .. controls (224.56,150.39) and (249.16,158.39) .. (265.1,142.2) ;

\draw  [color={rgb, 255:red, 150; green, 150; blue, 150 }  ,draw opacity=1 ][dash pattern={on 4.5pt off 4.5pt}]  (265.1,142.2) .. controls (252.36,133.99) and (227.16,135.19) .. (215.1,142.2) ;

\draw  [color={rgb, 255:red, 150; green, 150; blue, 150 }  ,draw opacity=1 ]  (315.9,141.8) .. controls (325.36,149.99) and (349.96,157.99) .. (365.9,141.8) ;

\draw  [color={rgb, 255:red, 150; green, 150; blue, 150 }  ,draw opacity=1 ][dash pattern={on 4.5pt off 4.5pt}]  (365.9,141.8) .. controls (353.16,133.59) and (327.96,134.79) .. (315.9,141.8) ;

\draw  [color={rgb, 255:red, 150; green, 150; blue, 150 }  ,draw opacity=1 ]  (365.5,141.4) .. controls (374.96,149.59) and (399.56,157.59) .. (415.5,141.4) ;

\draw  [color={rgb, 255:red, 150; green, 150; blue, 150 }  ,draw opacity=1 ][dash pattern={on 4.5pt off 4.5pt}]  (415.5,141.4) .. controls (402.76,133.19) and (377.56,134.39) .. (365.5,141.4) ;

\draw  [color={rgb, 255:red, 150; green, 150; blue, 150 }  ,draw opacity=1 ]  (415.1,141.6) .. controls (424.56,149.96) and (449.16,158.14) .. (465.1,141.6) ;

\draw  [color={rgb, 255:red, 150; green, 150; blue, 150 }  ,draw opacity=1 ][dash pattern={on 4.5pt off 4.5pt}]  (465.1,141.6) .. controls (452.36,133.21) and (427.16,134.44) .. (415.1,141.6) ;

\draw    (364.3,94) .. controls (373.76,102.19) and (398.36,110.19) .. (414.3,94) ;

\draw  [dash pattern={on 4.5pt off 4.5pt}]  (414.3,94) .. controls (401.56,85.79) and (376.36,86.99) .. (364.3,94) ;

\draw    (365.86,41.44) .. controls (375.32,49.63) and (399.92,57.63) .. (415.86,41.44) ;

\draw  [dash pattern={on 4.5pt off 4.5pt}]  (415.86,41.44) .. controls (403.12,33.23) and (377.92,34.43) .. (365.86,41.44) ;

\draw (45,308) node [scale=1.14]  {$\Sigma _{L_{3}}$};

\end{tikzpicture}

\end{center}

\begin{center}
{\small Figure 4. A bubble tree of three levels}
\end{center}

\vspace{.2cm}

\noindent{\bf Step 6} - {\it Collapsing ghost components}. Let $k_0\in \mathbb N$ be such that $\mathcal C_{k_0}$ is empty. Note $\Sigma_{L_{k_0}}$ is a union of $\overline {\Sigma}_0$ and finitely many bubbling components indexed by some finite set $J=J(k_0)$. Then 
\begin{equation}
\Sigma_{L_{k_0}} = \bigcup_{i\in I} \overline{\Sigma^{i}_0} \cup \bigcup_{j\in J} \mathbb S^2_j, 
\end{equation}
where $\Sigma^i_0$ and $I$ are defined in (\ref{component of Sigma_0}). Moreover, $V^{k_0}_n \subset \Sigma_{L_{k_0}}$ decomposes into connected components $\{ V^{\tiny\mbox p}_{i, n}\}_{i\in I}$, $\{V^j_n\} _{j\in J}$ so that $V^{\tiny\mbox p}_i \subset \Sigma^i_0$ for each $i \in I$ and $V^j_n \subset \mathbb S^2 _j$ for each $j \in J$. Define

\begin{equation}
\begin{split}
\varphi^{k_0}_n &= \varphi^{k_0-1, k_0}_n \circ \cdots \circ \varphi ^{1, 2}_n \circ \varphi^{0,1}_n,\\
U^{\tiny\mbox p}_{i,n} &= (\varphi^{k_0}_n)^{-1} V^{\tiny\mbox p}_{i,n},  \ \ i\in I,\\
U^j_n &= (\varphi^{k_0}_n)^{-1} V^j_{n}, \ \ j\in J.
\end{split}
\end{equation}
When restricted to some ${\Sigma^{i}_0}$ and $\mathbb S^2_j$, $F_\infty$ might be constant, in this case we call those components the ghost components (when the component is a bubble component, it is called a ghost bubble in \cite{P}). In constructing the stratified surface, we delete the ghost components. To this end, define
\begin{equation}
\begin{split}
I_0 &= \{ i \in I : F_\infty |_{\Sigma^i_0} \text{ is non-constant}\}, \\
J_0 &= \{ j\in J : F_\infty |_{\mathbb S^2_j}\text{ is non-constant}\}. 
\end{split}
\end{equation}
Let $\Sigma_\infty$ be defined by collapsing $\Sigma^i_0, {i\notin I_0}$ and $ \mathbb S^2_j, j\notin J_0$ in $\Sigma_{L_{k_0}}$. Let
\begin{equation}
\Pi : \Sigma_{L_{k_0}} \to \Sigma_\infty
\end{equation}
be the projection. 
Lastly, define $U_n \subset \Sigma$, $V_n \subset \Sigma_\infty$ and $\varphi_n$ by 
\begin{equation}
\begin{split}
U_n &= \bigcup_{i\in I_0} U^{\tiny\mbox p}_{i, n} \cup \bigcup_{j\in J_0} U^j_n, \\
V_n &= \Pi \left( \bigcup_{i\in I} V^{\tiny\mbox p}_{i, n} \cup \bigcup_{j\in J} V^j_n\right), \\
\varphi_n &= \Pi \circ \varphi^{k_0}_n.
\end{split}
\end{equation}
Let $P$ be the set of non-smooth point of $\Sigma_\infty$. Then $P = \Pi (P_{k_0})$ and $\Sigma_\infty$ is a stratified surface and $U_n, V_n, \varphi_n$ satisfy (1)-(3) in Definition \ref{definition of bubble tree convergence}. 

From Step 1 to Step 6, $\{F_n\}$ converges to $F_\infty$ in the sense of bubble tree. The area identity (\ref{area identity}) follows from \cite[Proposition 2.6 (2)]{CLi}. By Step 6, $F_\infty$ is non-constant on each component. Thus on each component, $F_\infty$ is a branched conformal HSL immersion.

It remains to show (\ref{HSL weakly}). Recall that the sequence $\{F_n\}$ has uniformly bounded Willmore energies. By \cite[Remark 3.3]{CLi}, the $L^2$-norms of the second fundamental form is also uniformly bounded. Then the bubble tree convergence implies 
\begin{equation}
\int_{\Sigma_\infty} |A_\infty|^2 d\mu_\infty \le \lim_{n\to \infty } \int _\Sigma |A_n|^2 d\mu_n<+\infty,
\end{equation}
where $du_\infty$ is the area element in the metric $g_\infty=F^*_\infty{\bar g}$. 

Let $\mathbb S$ be any component of $\Sigma_\infty$ and let $x\in F_\infty (\mathbb S)$. Let $F^{-1}_\infty |_{\mathbb S} (x) = \{ y_1, \cdots, y_k\}$. By \cite[Theorem 3.1]{KLi} (see also \cite{Helein}), we have
\begin{equation} 
\lim_{r \to 0} \frac{\mu_\infty \big(F^{-1}_\infty|_{\mathbb S} (B^0_x(r)\big)}{\pi r^2} = \sum_{i=1}^k m_i,   
\end{equation}
where $m_i$ is the branching order of $F_\infty$ at $y_i$. In particular, \cite[Lemma 2.2]{CLi2} is applicable and thus $F_\infty (\mathbb S)$ is a rectifiable integral 2-varifold with generalized mean curvature in $L^2$. From the proof of \cite[Lemma 2.2]{CLi2} the generalized mean curvature equals the usual  mean curvature vector $\vec H_\infty$ away from the branch points.  

Let $f\in C^\infty_c(M)$. Then from (\ref{H . JDf = - alpha . df}), 
\begin{equation} 
\int_{\mathbb S} \bar g (\vec H_\infty, J \overline\nabla f) \,d\mu_\infty = - \int_{\mathbb S} \langle\alpha_\infty, d (f|_{\mathbb S}) \rangle_{g_\infty} \,d\mu_\infty =0
\end{equation}
since $d^*\alpha_\infty = 0$ in the sense of distribution \cite[p.41]{SW}. Thus we establish (\ref{HSL weakly}) and conclude Theorem \ref{main thm}. 
\end{proof}

\section{Hamiltonian Stationary Lagrangian Tori in $\mathbb C^2$ and $\mathbb{CP}^2$}

Every torus $(\mathbb T^2, h)$ is conformal to $\mathbb C/\Lambda$ with the Euclidean metric for some lattice 
$$
\Lambda = \operatorname{span}_{\mathbb Z} \left\{ 1, \tau_1+\tau _2\sqrt{-1}\right\},
$$
where $\tau = \tau_1+ \tau_2 \sqrt{-1}$ satisfies (\ref{range of tau}). When $(M, \omega)$ is K{\"a}hler-Einstein and $F : \mathbb C/\Lambda \to M$ is a branched conformal HSL immersion, the mean curvature 1-form $\alpha$ is harmonic on $\mathbb C/\Lambda$ equipped with the flat metric descended from the Euclidean metric on $\mathbb C$. It is easy to check that every harmonic 1-form on the torus is constant, i.e. 
$$\alpha = \alpha_x dx + \alpha_y dy$$
for some constants $\alpha_x, \alpha_y$, where $dx$ and $dy$ are globally defined 1-forms on $\mathbb C / \Lambda$. Let $\tilde \alpha = (\alpha_x, \alpha_y)$. With this identification, 
\begin{equation} \label{W as maslov class}
\mathcal W(F) = \frac 14 \int_{\mathbb C/\Lambda} |\alpha|^2 dxdy = \frac 14 |\tilde \alpha|^2 A(\mathbb C/\Lambda),
\end{equation}
where $A(\mathbb C/\Lambda) = \tau_2$ is the Euclidean area of $\mathbb C/\Lambda$ (cf. \cite{MR}). 

\begin{proof} [Proof of Theorem \ref{compactness of HSL tori}] Assume that the sequence $\{F_n\}$ does not converge to a point. Using that $0\in F_n(\mathbb T^2)$ and Simon's diameter estimates \cite{S}, there is $R>0$ so that $F_n(\mathbb T^2) \subset \overline{B_0(R)}$ for all $n\in \mathbb N$. Thus we can apply Theorem \ref{main thm} and a subsequence of $\{F_n\}$ converges in the sense of bubble tree to a stratified surface $F_\infty : \Sigma_\infty \to \mathbb C^2$. Note that by construction, $F_\infty$ is a branched conformal HSL immersion when restricted on each component. In particular, there can not be any bubbling component by Corollary \ref{no HSL S^2}. 

Next we show that the sequence of conformal structures $\{h_n\}$ do not degenerate. Since we assume that $\{F_n\}$ does not converge to a point, $F_\infty$ is non-constant. Arguing by contradiction, if the sequence of conformal structures $\{h_n\}$ degenerates, the principal component $\Sigma_0$ is a union of 2-spheres. Again by Corollary \ref{no HSL S^2}, this is impossible. Thus $\{h_n\}$ do not degenerate. 

From the above discussion, we have showed that $F_\infty$ is a branched conformal HSL immersed torus and $h_n \to h_\infty$ as $n\to \infty$. It remains to show the smooth convergence (Note that, unlike the case for harmonic maps \cite{P}, the smooth convergence does not follow from the absent of non-trivial bubbles). 

Let $\alpha_n = \alpha_{\vec H_n} = \alpha_x^n dx + \alpha_y^n dy$ be the corresponding mean curvature 1-forms and 
$\tilde \alpha_n = (\alpha_x^n, \alpha_y^n)$. The Willmore energies are uniformly bounded above, by (\ref{W as maslov class}) we have
$$ |\tilde \alpha_n|^2  \le \frac{C}{ \tau_2^n},\ \ \ \text{ for all }n\in \mathbb N.$$ 
From (\ref{range of tau}) we have $\tau_2 \ge \frac{\sqrt 3}{2}$. Thus 
\begin{equation} \label{tilde alpha uniformly bounded}
|\tilde \alpha_n|\le C, \ \ \ \text{ for all } n\in \mathbb N.
\end{equation}
In particular, for any set $U \subset \mathbb T^2$ we have 
\begin{equation} \label{bounds W by A}
\int_U |\vec H_n|^2 d\mu = |\tilde \alpha_n|^2\int_U dxdy \le C A_{n}(U),
\end{equation}
where $A_n(U)$ is the area of $U$ in $(\mathbb T^2,h_n)$. Since $\{h_n\}$ converges smoothly to $h_\infty$, from (\ref{bounds W by A}) there is no Willmore energy concentration. On the other hand, the area identity (\ref{area identity}) and the absence of nontrivial bubbles implies that there is no concentration of $\|\nabla F_n\|_{L^2}$. 
We have a uniform bound on $\|F_n\|_{C^{k,\beta}(\mathbb C/\Lambda_n)}$, for each $k\in \mathbb N$. We can therefore extract a smooth convergent subsequence.
\end{proof}

Next, we consider $M=\mathbb{CP}^2$ endowed with the Fubini-Study metric. 

\begin{proof} [Proof of Theorem \ref{Compactness of HSL tori in CP^2 with small area}:] We assume that $\{F_n\}$ does not converge to a point. By the assumption of Theorem \ref{Compactness of HSL tori in CP^2 with small area}, $\{F_n\}$ has uniformly bounded areas and Willmore energies. By Theorem \ref{main thm}, a subsequence of $\{F_n\}$ converges to $F_\infty$ in the sense of bubble tree. By the area identity (\ref{area identity}), we have 
\begin{equation} \label{area of F_infty <2 A(RP^2)} \operatorname{Area} (F_\infty) < 2\operatorname{Area}(\mathbb{RP}^2).
\end{equation}
Next we argue that there is no nontrivial bubble at the limit. By Theorem \ref{main thm}, if $f : \mathbb S^2 \to \mathbb{CP}^2$ is one of the nontrivial bubbles, then it is a branched conformal HSL $2$-sphere. By Corollary \ref{no HSL S^2}, $f : \mathbb S^2 \to \mathbb{CP}^2$ is a branched conformal minimal Lagrangian immersion. A theorem of Yau \cite{Yau} asserts that $f$ is totally geodesics. 
Note that the aforementioned theorem in \cite{Yau} is proved for immersions. In general, one can use the argument in \cite{CWolf}: For any branched conformal minimal immersion into a K{\"a}hler manifold of constant holomorphic sectional curvature, the cubic differential 
\begin{equation}
C = C(z) \,dz^3, \ \ \ C(z) = 4 \omega (\overline \nabla_{\partial_z} \partial_z, \partial_z)
\end{equation}
is holomorphic (In \cite{CWolf} they only consider immersed surface, but the cubic form $C$ is clearly smooth even when there are branch points). However, there is no nontrivial holomorphic cubic differential on $\mathbb S^2$ so $C$ is identically zero. This implies that the second fundamental form $A$ is identically zero, and hence $f$ is totally geodesic. The fact that $f$ is Lagrangian implies that $f : \mathbb S^2 \to \mathbb{CP}^2$ is a branched cover of the totally geodesic $\mathbb{RP}^2$ in $\mathbb{CP}^2$. 

In particular, the degree $d$ of $f$ is at least two and 
$$\operatorname{Area}(f) = d \operatorname{Area} (\mathbb{RP}^2) \ge 2\operatorname{Area} (\mathbb{RP}^2) .$$
But this contradicts (\ref{area of F_infty <2 A(RP^2)}). Thus, there cannot be any nontrivial bubbles. 

Finally, arguments similar to that in the proof of Theorem \ref{compactness of HSL tori} assert non-degeneracy of conformal structure $\{ h_n\}$ and smooth convergence of a subsequence of $\{ F_n\}$ to $F_\infty$. 
\end{proof}


\begin{thebibliography}{10}
\bibitem{A}
H. Anciaux:
{\sl Construction of many Hamiltonian stationary Lagrangian surfaces in Euclidean four-space}. Calc. Var. Partial Differ. Equ. {\bf 17}, No. 2 (2003), 105-120.

\bibitem{AC1}
H. Anciaux; I. Castro: 
{\sl Construction of Hamiltonian-Minimal
Lagrangian submanifolds in Complex
Euclidean Space}, Results. Math. {\bf 60} (2011), 325-349. 


\bibitem{BC}
A. Butscher; J. Corvino:
{\sl Hamiltonian stationary tori in K{\"a}hler manifolds}, Calc. Var. Partial Differ. Equ. {\bf 45}, No. 1-2  (2012), 63-100.

\bibitem{CU}
I. Castro; F. Urbano:
{\sl Examples of unstable Hamiltonian-minimal
Lagrangian tori in $\mathbb C^2$}, Compos. Math. {\bf 111}, No. 1 (1998), 1-14.


\bibitem{CDVV}
B. Y. Chen; F. Dillen; L. Verstraelen; L. Vrancken: {\sl Lagrangian isometric immersions of a real-space-form $M^n(c)$ into a complex-space-form $\tilde M^ n(4c)$}, Math. Proc. Cambridge Philo. Soc. {\bf 124} (1998), 107-125. 

\bibitem{CLi}
J. Chen; Y. Li: 
{\sl Bubble tree of branched conformal immersions and applications to the Willmore functional}, Amer. J. Math. Vol. {\bf 136}, No.4 (2014), 1107-1154.

\bibitem{CLi2}
J. Chen; Y. Li:
{\sl Extendability of conformal structures on punctured surfaces}. Int. Math. Res. Not. IMRN 2019, no. 12, 3855-3882.

\bibitem{CMa}
J. Chen; J. Ma:
{\sl The space of compact self-shrinking solutions to the Lagrangian mean curvature flow in $\mathbb C^2$}, J. Reine Angew. Math. {\bf 743} (2018), 229-244. 

\bibitem{CT} J. Chen; G. Tian:
{\sl Compactification of moduli space of harmonic mappings}, Comment. Math. Helv., Vol. {\bf 74}, No. 2 (1999), 201-237.

\bibitem{CW1}
J. Chen; M. Warren:
{\sl On the regularity of Hamiltonian stationary Lagrangian submanifolds}, Adv. Math. {\bf 343} (2019), 316-352. 

\bibitem{CW2}
J. Chen; M. Warren:
{\sl Compactification of the space of Hamiltonian Stationary Lagrangian submanifolds with bounded total extrinsic curvature and volume},  arXiv:1901.03316v1

\bibitem{CWolf}
S-S. Chern; J. Wolfson: 
{\sl Minimal Surfaces by Moving Frames}, Am. J. Math. {\bf 105} (1983), 59-83.


\bibitem{D}
P. Dazord:
{\sl Sur la g\'eometie des sous-fibr\'es et des feuilletages lagrangiense}, Ann. Sci. \'Ec. Norm. Super., {\bf IV}, Ser.13 (1981), 465-480.

\bibitem{dTK}
M. Deturck; L. Kazdan:
{\sl Some regularity theorems in Riemannian geometry}, Ann. Sci. \'Ec. Norm. Super., (4) {\bf 14}, No. 3 (1981), 249-260. 

\bibitem{F}
G. B. Folland: 
Introduction to Partial Differential Equations, Princeton University Press and University of Tokyo Press, 1976.


\bibitem{GT}
D. Gilbarg; N. S. Trudinger:
{\sl Elliptic Partial Differential Equations of Second Order}, Classics in Mathematics, Springer-Verlag, Berlin, 2001. 

\bibitem{Helein} F. H\'elein: {\sl Harmonic maps, conservation laws and moving frames}, Cambridge Tracts in Mathematics 150, Cambridge University Press, 2002.

\bibitem{HR1}
F. H{\'e}lein; P. Romon:
{\sl Hamiltonian stationary Lagrangian surfaces in $\mathbb C^2$}, Commun. Anal. Geom. {\bf 10}, No. 1 (2002), 79-126.

\bibitem{HR2}
F. H{\'e}lein; P. Romon:
{\sl Hamiltonian stationary Lagrangian surfaces in Hermitian symmetric spaces}. Guest, Martin (ed.) et al., Differential geometry and integrable systems. Proceedings of the conference, Tokyo, Japan, July 17–21, 2000. Providence, RI: American Mathematical Society (AMS). Contemp. Math. 308, 161-178 (2002).

\bibitem{Hor}
L. H{\"o}rmander: 
{\sl The Analysis of Linear Partial Differential Operators I}, 2nd ed. Springer-Verlag, 1990.

\bibitem{H}
C. Hummel: 
{\sl Gromov'��s Compactness Theorem for Pseudo-Holomorphic Curves}, Progr. Math., vol. 151,
Birkh{\"a}user Verlag, Basel, 1997.

\bibitem{HM}
R. Hunter; I. McIntosh:
{\sl The classification of Hamiltonian stationary Lagrangian tori in $\mathbb{CP}^2$ by their spectral data}, Manuscr. Math. 135, No. 3-4 (2011), 437-468.

\bibitem{JLS}
D. Joyce; Y.-I. Lee; R. Schoen:
{\sl On the existence of Hamiltonian stationary Lagrangian submanifolds in symplectic manifolds}, Am. J. Math. {\bf 133}, No. 4 (2011), 1067-1092.

\bibitem{KLi}
E. Kuwert; Y. Li:
{\sl $W^{2,2}$-conformal immersions of a closed Riemann surface into $\mathbb R^n$}, Commun. Anal. Geom. {\bf 20} (2012), no. 2, 313-340.

\bibitem{Lee}
Y.-I. Lee:
{\sl The existence of Hamiltonian stationary Lagrangian tori in K{\"a}hler manifolds of any dimension}, Calc. Var. Partial Differ. Equ. {\bf 45}, No. 1-2 (2012), 231-251.


\bibitem{Ma}
H. Ma:
{\sl Hamiltonian stationary Lagrangian surfaces in $\mathbb{CP}^2$}. Ann. Global Anal. Geom. {\bf 27}, No. 1 (2005), 1-16.


\bibitem{MS}
H. Ma; M. Schmies:
{\sl Examples of Hamiltonian stationary Lagrangian tori in $\mathbb{CP}^2$}. Geom. Dedicata {\bf 118} (2006), 173-183.

\bibitem{MR}
I. McIntosh; P. Romon: 
{\sl The spectral data for Hamiltonian stationary Lagrangian tori in $\mathbb R^4$}, Differ. Geom. Appl. {\bf 29}, No. 2 (2011), 125-146. 

\bibitem{Moriya}
K. Moriya:
{\sl The denominators of Lagrangian surfaces in complex Euclidean plane}, Ann. Global Anal. Geom. {\bf 34}, No. 1 (2008), 1-20. 

\bibitem{Mironov}
A.E. Mironov:
{\sl On Hamiltonian-Minimal Lagrangian Tori in $\mathbb{CP}^2$}, Siberian Math. J., {\bf 44:6} (2003), 1039-1042. 

\bibitem{Oh1}
Y.-G. Oh: 
{\sl Second variation and stabilities of minimal Lagrangian submanifolds in K{\"a}hler manifolds}, Invent. Math. {\bf 101}, No. 2 (1990), 501-519.

\bibitem{Oh2}
Y.-G. Oh: 
{\sl Volume minimization of Lagrangian submanifolds under Hamiltonian deformations}, Math. Z. {\bf 212}, No. 2 (1993), 175-192.

\bibitem{P}
T. H. Parker:
{\sl Bubble Tree Convergence for Harmonic Maps}, J. Differential Geom., Vol. {\bf 44} (1996), 595-633.

\bibitem{SW}
R. Schoen; J. Wolfson: 
{\sl Minimizing Area Among Lagrangian Surfaces: The Mapping Problem}, J. Differ. Geom. {\bf 58}, No. 1 (2001), 1-86. 

\bibitem{S}
L. Simon:
{\sl Existence of surfaces minimizing the Willmore functional}, Comm. Anal. Geom., {\bf 1} (1993), 281-326.

\bibitem{SU}
J. Sacks; K. Unlenbeck:
{\sl The existence of minimal immersions of 2 spheres}, Ann. of Math. {\bf 113} (1981), 1-24.


\bibitem{Yau}
S.T. Yau: 
{\sl Submanifolds with constant mean curvature. I}. Am. J. Math. {\bf 96} (1974), 346-366.

\bibitem{Z}
M. Zhu, 
{\sl Harmonic maps from degenerating Riemann surfaces}, Math. Z., {\bf 264} (2010), no. 1, 63-85.
\end{thebibliography}
\end{document}